\tikzset{
commutative diagrams/.cd,
arrow style=tikz,
diagrams={>=latex}}
\theoremstyle{definition}
\newtheorem{definition}{Definition}[section]
\newtheorem{proposition}[definition]{Proposition}
\newtheorem{corollary}[definition]{Corollary}
\newtheorem{lemma}[definition]{Lemma}
\newtheorem{remark}[definition]{Remark}
\newtheorem{theorem}[definition]{Theorem}
\newtheorem*{theorem*}{Theorem}
\newtheorem{example}{Example}[section]
\newcommand{\mf}{\mathcal{F}}
\newcommand{\id}{\mathrm{Id}}
\newcommand{\ad}{\mathrm{ad}}
\newcommand{\End}{\mathrm{End}}
\newcommand{\As}{\mathsf{As}}
\newcommand{\KAs}{\As^{\mbox{!`}}}
\newcommand{\frakg}{\mathfrak{g}}
\newcommand{\fraka}{\mathfrak{a}}
\newcommand{\MC}{\mathsf{MC}}
\newcommand{\dgr}{\mathsf{dgr}}
\newcommand{\dgraphs}{\mathsf{dgraphs}}
\newcommand{\coder}{\mathrm{Coder}}
\newcommand{\codiff}{\mathrm{Codiff}}
\newcommand{\Hom}{\mathrm{Hom}}
\newcommand{\bch}{\mathrm{bch}}
\newcommand{\Aut}{\mathrm{Aut}}
\newcommand{\Duf}{\mathrm{Duf}}
\newcommand{\Sym}{\mathrm{Sym}}
\newcommand{\dgraphsuni}{\mathsf{dgraphs}^{\mathrm{uni}}}
\title{On a homotopy version of the Duflo isomorphism}
\author{Matteo Felder}
\begin{document}

\begin{abstract}
For a finite dimensional Lie algebra $\mathfrak{g}$, the Duflo map $S\mathfrak{g}\rightarrow U\frakg$ defines an isomorphism of $\mathfrak{g}$-modules. On $\mathfrak{g}$-invariant elements it gives an isomorphism of algebras. Moreover, it induces an isomorphism of algebras on the level of Lie algebra cohomology $H(\mathfrak{g},S\mathfrak{g})\rightarrow H(\mathfrak{g}, U\mathfrak{g})$. However, as shown by J. Alm and S. Merkulov, it cannot be extended in a universal way to an $A_\infty$-isomorphism between the corresponding Chevalley-Eilenberg complexes. In this paper, we give an elementary and self-contained proof of this fact using a version of M. Kontsevich's graph complex.
\end{abstract}

\address{%
Matteo Felder\\
Dept. of Mathematics\\
University of Geneva\\
2-4 rue du Li\`evre\\
1211 Geneva 4\\
Switzerland\\            
Matteo.Felder@unige.ch 
}
\maketitle
\tableofcontents

\section*{Introduction}

For a finite dimensional Lie algebra $\frakg$, the Duflo map $\Duf:S\frakg\rightarrow U\frakg$ is an isomorphism of $\frakg$-modules. It is defined as the composition of the symmetrization map with the Duflo element, the formal power series on $\frakg$,
\begin{equation*}
\det\left(\frac{e^{\ad_x/2}-e^{-\ad_x/2}}{\ad_x}\right)^{1/2},
\end{equation*}
viewed as a differential operator of infinite order acting on $S\frakg$. It is a non-trivial fact that when restricted to $\frakg$-invariant elements, the map $\Duf:S\frakg^\frakg\rightarrow U\frakg^\frakg$ is an isomorphism of algebras \cite{Duflo77}. Moreover, it induces an isomorphism of algebras $H(\frakg,S\frakg)\rightarrow H(\frakg,U\frakg)$ on the level of Lie algebra cohomology (\cite{Shoikhet2003}, \cite{PevznerTorossian2004}). In particular, this implies that on chains $\Duf:C(\frakg,S\frakg) \rightarrow C(\frakg,U\frakg)$ respects the algebra structures up to homotopy. More precisely, there exists a map $\Duf_2:C(\frakg,S\frakg)^{\otimes 2}\rightarrow C(\frakg,U\frakg)$ which measures the failure of the Duflo map to be an algebra morphism, i.e. it satifies,
\begin{equation*}
\Duf (m_{C(\frakg,S\frakg)})=m_{C(\frakg,U\frakg)} (\Duf\otimes \Duf)+ d ( \Duf_2)+\Duf_2(d\otimes 1+1\otimes d).
\end{equation*}
It is natural to require $\Duf_2$ to satisfy further compatibility conditions up to homotopy. However, it turns out that this procedure may not be extended (in a universal way) to higher homotopies of arbitrary order. In other words, the Duflo map does not extend to an $A_\infty$-isomorphism. This was shown by J. Alm in \cite{Alm2011},\cite{Alm2014}, and later again in collaboration with S. Merkulov \cite{AlmMerkulov}. They work with a variety of techniques from the theory of graph complexes, and use deep results by T. Willwacher \cite{Willwacher2014}. The purpose of this text is to present a self-contained, elementary and (hopefully) more accessible proof of this fact using yet another variant of M. Kontsevich's graph complex.

Denote by $\tilde m_\Duf$ the product on $C(\frakg,S\frakg)$ defined by pulling back the product on $C(\frakg,S\frakg)$ via the Duflo map. J. Alm and S. Merkulov's theorem (\cite{Alm2014}, Proposition 5.3.0.10) may then be reformulated as follows.
\begin{theorem*}
There does not exist a universal (i.e. independent of the specific choice of the Lie algebra $\frakg$) $A_\infty$-isomorphism,
\begin{equation*}
f:(C(\frakg,S\frakg),d_{C(\frakg,S\frakg)}, m_{C(\frakg,S\frakg)})\rightarrow (C(\frakg,S\frakg),d_{C(\frakg,S\frakg)},\tilde m_\Duf),
\end{equation*}
whose first component $f_1$ is the identity.
\end{theorem*}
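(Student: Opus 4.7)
The plan is to re-encode the problem in a variant of Kontsevich's graph complex and then to locate a non-vanishing obstruction class. Concretely, I would introduce a dg vector space of graphs $\dgraphs$ whose elements act universally as polydifferential operators on $C(\frakg, S\frakg)$ for every finite dimensional Lie algebra $\frakg$. In this setting the standard cup product $m_{C(\frakg, S\frakg)}$ and the pulled-back product $\tilde m_\Duf$ are both represented by explicit graph cocycles, with $\tilde m_\Duf - m_{C(\frakg, S\frakg)}$ given by a combination of graphs carrying the Bernoulli-type coefficients of the Duflo element. Universal $A_\infty$-morphisms with first component $f_1 = \id$ then correspond to Maurer-Cartan elements of a specific form in the convolution dg Lie algebra $\Hom(\KAs, \dgraphs)$.

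Once this encoding is in place, I would apply the standard perturbative obstruction theory to the Maurer-Cartan equation. Assuming inductively that universal components $f_1 = \id, f_2, \dots, f_n$ satisfying the first $n$ $A_\infty$-morphism relations have been constructed as elements of $\dgraphs$, the obstruction to extending to $f_{n+1}$ is a cocycle $o_n \in \dgraphs$, and such an extension exists precisely when $o_n$ is a coboundary with respect to the graph differential.

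The crux of the argument is to exhibit an $n$ for which $[o_n] \neq 0$ in $H^*(\dgraphs)$. I would do this by making the inductive obstruction explicit: expand the difference $\tilde m_\Duf - m_{C(\frakg, S\frakg)}$ to lowest non-trivial order in the natural grading on $\dgraphs$, run the induction through this order, and identify the resulting cocycle. The expected answer is a wheel-type graph reflecting the trace structure $\det(\ldots)^{1/2}$ of the Duflo element. To prove non-triviality I would either exhibit a dual pairing that detects the class, or argue directly by a bidegree/valence count that no universal primitive can exist in the relevant piece of $\dgraphs$.

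The main obstacle is this last step. Abstract existence of an obstruction class is formal, but showing that the specific cocycle produced by the Duflo term survives in the cohomology of the graph complex requires a careful combinatorial analysis: one must rule out the existence of any graph whose differential produces the wheel cocycle. The remaining ingredients, namely the translation between universal $A_\infty$-morphisms with $f_1 = \id$ and Maurer-Cartan elements of the convolution algebra, and the extraction of $o_n$ from the $A_\infty$-morphism relations, are routine once the graph complex framework is in place.
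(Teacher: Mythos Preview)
Your overall strategy coincides with the paper's: encode universal maps via a Kontsevich-type graph operad $\dgraphs$, identify universal $A_\infty$-isomorphisms (with $f_1=\id$) with gauge equivalences of Maurer-Cartan elements in $\Hom(\KAs,\dgraphs)$, and show that the two MC elements $\alpha_0$ and $\alpha_\Duf$ are separated by a non-trivial obstruction class. So the architecture is right.

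Two points, however, are off and would cause trouble if you carried them out as written.

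\textbf{The obstruction is not a wheel.} You anticipate a ``wheel-type graph reflecting the trace structure $\det(\ldots)^{1/2}$''. The wheel graphs do appear among the Kontsevich weights of $\tilde m_\Duf$, but they are \emph{not} the obstruction: they are gauge-trivial. The paper first performs an explicit gauge transformation $\alpha_\Duf \mapsto \alpha'_\Duf = e^{\ad_\xi}\alpha_\Duf$ (with $\xi$ written out to the needed order) which strips away all arity-$2$ corrections, including the wheel, and leaves as the lowest surviving term the \emph{tripod}
\[
b \;=\; \text{(one internal vertex joined by three undirected edges to three external vertices)} \;\in\; \dgraphs(3),
\]
with coefficient $\tfrac{1}{24}$. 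If you run the induction looking for a wheel obstruction you will find it is exact and conclude nothing.

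\textbf{The filtration and the non-triviality argument.} Your arity-by-arity induction is workable in principle, but the paper uses instead the grading by $(\#\text{vertices})-1$, for which $\alpha_0=a_1+a_2$ sits entirely in degree $1$ and the tripod $b$ is the first correction (degree $3$). This makes Lemma~1.8-style obstruction theory clean: one only needs $[b]\neq 0$ in $H^1(\frakg_{\As,\dgraphs},\ad_{a_1+a_2})$. For non-triviality, the paper does not use a pairing; it first computes $H(\frakg_{\As,\dgraphs},\ad_{a_2})$ by identifying the relevant complex with the multilinear part of the cobar construction $\Omega P_n^{(1,\dots,1)}$ (whose cohomology is one-dimensional, spanned by the antisymmetrizer), and then upgrades to $\ad_{a_1+a_2}$ by showing $b$ is cohomologous to a graph $b'$ with \emph{no} internal vertices, hence not in the image of $\ad_{a_1}$. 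This is exactly the kind of ``valence/bidegree count'' you allude to, but it hinges on first moving the obstruction into arity $\geq 3$ via the preliminary gauge step, which your outline omits.
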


The strategy of our proof is the following. We begin by introducing a version of M. Kontsevich's graph complex \cite{Kontsevich2003}. For $n\geq 1$, we denote by $\dgraphs(n)$ the graded vector space spanned by directed graphs having $n$ ``external" vertices (labeled, of arbitrary valence) and possibly some ``internal" vertices (unlabeled, at most trivalent) modulo a set of relations encoding the Lie algebra structure of $\frakg$. The product $\prod_{n\geq 1} \dgraphs(n)$ carries the structure of a graded Lie algebra. Next, we recall a variant of M. Kontsevich's representation $B:\dgraphs(n)\rightarrow \Hom(C(\frakg,S\frakg)^{\otimes n},C(\frakg,S\frakg))$. It turns out that both $m_{C(\frakg,S\frakg)}$ and $\tilde m_\Duf\in \Hom(C(\frakg,S\frakg)^{\otimes 2},C(\frakg,S\frakg))$ may be described via this representation. In particular, $\tilde m_\Duf$ corresponds precisely to M. Kontsevich's celebrated star product on $C^\infty (\frakg^*)$, i.e. smooth functions on the dual Lie algebra $\frakg^*$, viewed as a Poisson manifold (\cite{Kontsevich2003},\cite{Shoiket2001}). Within this setting, we find that \emph{universal} $A_\infty$-structures on $C(\frakg,S\frakg)$ are in bijection with Maurer-Cartan elements of the graded Lie algebra $\prod_{n\geq 1}\dgraphs(n)$, that is, (linear combinations of) graphs $\alpha$ of degree one, satisfying the equation,
\begin{equation*}
[\alpha,\alpha]=0.
\end{equation*}
Moreover, $A_\infty$-isomorphic structures correspond to gauge equivalent Maurer-Cartan elements. This reduces the question of the existence of such an $A_\infty$-isomorphism to a combinatorial problem in graph theory. If $\alpha_0$ and $\alpha_\Duf$ are the Maurer-Cartan elements inducing the two products on $C(\frakg,S\frakg)$, then for them to be gauge equivalent means that there is a $\xi\in \prod_{n\geq 1} \dgraphs(n)$ of degree zero such that,
\begin{equation*}
\alpha_0=\alpha_\Duf+[\xi,\alpha_\Duf]+\frac{1}{2!}[\xi,[\xi,\alpha_\Duf]]+\dots=e^{[\xi,-]}\alpha_\Duf.
\end{equation*}
The graphs $\alpha_0$ and $\alpha_\Duf$ are given explicitly up to a certain order. We may thus define $\xi$ inductively in such a way that it deforms $\alpha_\Duf$ into $\alpha_0$. In this process, we find that the graph,
\begin{center}
\resizebox{!}{1.5cm}{
\begin {tikzpicture}[-latex, auto ,node distance =1cm and 1cm ,on grid ,
semithick];

\tikzstyle{vertex1}=[circle,minimum size=2mm,draw=black,fill=black];
\tikzstyle{vertex2}=[circle,minimum size=2mm,draw=black,fill=white];
\tikzset{edge/.style = {-}};

\node[vertex2] (5) {$1$};
\node[vertex2] (6) [right=of 5,xshift=0.5cm] {$2$};
\node[vertex2] (7) [right=of 6,xshift=0.5cm] {$3$};

\node[vertex1] (8) [above=of 6,yshift=0.5cm] {};

\draw[edge] (5) to [] (8);
\draw[edge] (8) to [] (6);
\draw[edge] (8) to [] (7);
\end{tikzpicture}
}
\end{center}
\noindent defines an obstruction class which cannot be forced to vanish by gauge transformations.

\subsection*{Acknowledgements}
I am very grateful to Anton Alekseev and Thomas Willwacher for numerous useful discussions and suggestions. I also thank Ricardo Campos and Florian Naef for many fruitful exchanges. This work was supported by the grant MODFLAT of the European Research Council (ERC).

\section{Preliminaries}\label{section:preliminaries}

\subsection{$A_\infty$-structures}

The material presented in this section is well-known. We follow the textbook by J.-L. Loday and B. Vallette \cite{lodayvallette2012} and B. Keller's exposition \cite{Keller2001}. We work over a field $\mathbb{K}$ of characteristic zero. Let $A$ be a graded vector space. The \emph{suspension} $sA$ of $A$ is defined via the degree shift $(sA)_{p}=A_{p+1}$. Recall also the \emph{suspension map} $s:A\rightarrow sA$, the canonical map of degree $-1$ which sends $a$ to $a$. This sign convention corresponds to the one in \cite{Keller2001}, and \emph{not} to the one in \cite{lodayvallette2012}. 

\begin{definition}
An $A_\infty$-structure on $A$ is a collection of maps $m=\{m_n:A^{\otimes n} \rightarrow A\}_{n\geq 1}$ of degree $2-n$ satisfying for $n\geq 1$,
\begin{equation}\label{eq:ainfty}
\sum (-1)^{r+st} m_u (1^{\otimes r}\otimes m_s\otimes 1^{\otimes t})=0,
\end{equation}
where the sum runs over all decompositions $n=r+s+t$ and $u:=r+1+t$.
\end{definition}

\begin{definition}
For $(A,m)$, $(B,l)$ two $A_\infty$-algebras, a morphism of $A_\infty$-algebras $f:A\rightarrow B$ is a collection of maps $\{f_n:A^{\otimes n}\rightarrow B\}_{n\geq 1}$ of degree $1-n$ satisfying for $n\geq 1$,
\begin{equation}\label{eq:ainftymap}
\sum (-1)^{r+st} f_u (1^{\otimes r} \otimes m_s \otimes 1^{\otimes t})=\sum (-1)^s l_r (f_{i_1}\otimes f_{i_2}\otimes \cdots \otimes f_{i_r}),
\end{equation}
where the first sum runs over all decompositions $n=r+s+t$ and $u:=r+1+t$, and the second sum runs over all $1\leq r\leq n$ and all decompositions $n=i_1+\cdots +i_r$. Also,
\begin{equation*}
s=(r-1)(i_1-1)+(r-2)(i_2-1)+\cdots +2(i_{r-2}-1)+(i_{r-1}-1).
\end{equation*}
Given two $A_\infty$-morphisms $f:A\rightarrow B$ and $g:B\rightarrow C$, the $n$-th component of their composition is defined via the identity,
\begin{equation*}
(f\circ g)_n=\sum (-1)^s f_r (g_{i_1}\otimes \cdots \otimes g_{i_r}),
\end{equation*}
where the summation runs over the same elements as in equation \eqref{eq:ainftymap}.
\end{definition}

\begin{remark}
We adopt the notation as in the two previous definitions. Note that $m_1$ is of degree one and satisfies $m_1^2=0$, and therefore defines a differential on $A$. Moreover, as $f_1m_1=l_1 f_1$, $f_1$ defines a chain map between the complexes $(A,m_1)$ and $(B,l_1)$.
\end{remark}

\begin{definition}
A morphism of $A_\infty$-algebras $f=\{f_n\}_{n\geq 1}$ is called \emph{isomorphism} (\emph{quasi-isomorphism}) if $f_1$ is an isomorphism (quasi-isomorphism). An $A_\infty$-algebra $(A,m)$ is called \emph{strict} if $m_n=0$ for all $n\geq 3$.
\end{definition}

\subsection{$A_\infty$-structures via the convolution Lie algebra}
Consider the endomorphims operad of $A$, $\End_A=\{\Hom(A^{\otimes n},A)\}_{n\geq 1}$. For $f\in \Hom(A^{\otimes n},A)$ and $g\in \Hom(A^{\otimes m},A)$, the partial composition $f\circ_i g\in \Hom(A^{\otimes n+m-1},A)$ is simply,
\begin{equation*}
(f\circ_i g)(a_1\otimes \cdots \otimes a_{n+m-1}):=f(a_1\otimes \cdots \otimes a_{i-1}\otimes g(a_{i}\otimes \cdots \otimes a_{i+m})\otimes a_{i+m+1}\otimes \cdots \otimes a_{n+m-1}).
\end{equation*}

Next, consider the non-symmetric operad $\As$. It is one-dimensional in each arity, $\As(n):=\mathbb{K} \mu_n$, where the generators $\mu_n$ are of degree zero. The partial composition is given by $\mu_n \circ_{i} \mu_m=\mu_{n+m-1}$ for any $i$. The Koszul dual cooperad of $\As$, denoted $\KAs(n)$, is also one-dimensional in each arity, but with generators $\mu_n^c$ of degree $1-n$ (\cite{lodayvallette2012}, Section 9.1.5). The \emph{convolution Lie algebra} is,
\begin{equation*}
\frakg_{\As,A}:=\Hom(\KAs,\End_A)=\prod\limits_{n\geq 1} \Hom(\KAs(n),\End_A(n)).
\end{equation*}
We equip it with the pre-Lie product $\star$ defined for $F,G\in \Hom(\KAs,\End_A)$ by,
\begin{equation*}
F\star G:= \gamma_{(1)}\circ (F\circ_{(1)} G) \circ \Delta_{(1)},
\end{equation*}
where $\circ_{(1)}$ denotes the infinitesimal composite, $\gamma_{(1)}: \End_A\circ_{(1)} \End_A\rightarrow \End_A$ the infinitesimal composition map and $\Delta_{(1)}:\KAs\rightarrow \KAs\circ_{(1)}\KAs$ the infinitesimal decomposition map (\cite{lodayvallette2012}, Section 6.4.2). We consider $\frakg_{\As,A}$ as differential graded Lie algebra with zero differential. As graded vector spaces, we identify,
\begin{equation*}
\frakg_{\As,A}=\prod\limits_{n\geq 1} \Hom(\KAs(n),\End_A(n))\cong \prod\limits_{n\geq 1} \End_A(n)\otimes {\KAs}^*(n)=\prod\limits_{n\geq 1} \End_A(n)\otimes \mathbb{K}{\mu_n^c}^*,
\end{equation*}
where ${\mu_n^c}^*$ is dual to $\mu_n^c$, and therefore of degree $n-1$. The element $F\in \Hom(\KAs(n),\End_A(n))$ which sends $\mu_n^c$ to $f\in \End_A(n)$ will be mapped to $f\otimes {\mu_n^c}^*$ by the bijection above. Next, recall that,
\begin{equation*}
\prod\limits_{n\geq 1} \End_{sA}(n)=\prod\limits_{n\geq 1} \Hom((sA)^{\otimes n},sA),
\end{equation*}
endowed with the operation, 
\begin{equation*}
\tilde f\star' \tilde g:=\sum\limits_{i=1}^n \tilde f \circ_i \tilde g,
\end{equation*}
for $\tilde f\in \End_{sA}(n)$ and $\tilde g\in \End_{sA}(m)$ also defines a pre-Lie algebra (\cite{lodayvallette2012}, Section, 5.9.15). Moreover, note that the commutative diagram,
\begin{center}
\begin{tikzcd}
   (sA)^{\otimes n} \arrow{r}{\tilde f}  
   & sA  \\ 
  A^{\otimes n} \arrow{r}{f}\arrow{u}{s^{\otimes n}}
   & A\arrow{u}{s}.
\end{tikzcd}
\end{center}
defines a bijection between $\End_{sA}(n)$ and $\End_A(n)$. This correspondence yields the identification,
\begin{equation*}
\prod\limits_{n\geq 1} \End_{sA}(n)=\prod\limits_{n\geq 1} \Hom((sA)^{\otimes n},sA)\cong \prod\limits_{n\geq 1} \End_A(n)\otimes \mathbb{K}{\mu_n^c}^*,
\end{equation*}
by mapping $\tilde f \in \End_{sA}(n)$ to $f\otimes {\mu_n^c}^*\in \End_A(n)\otimes \mathbb{K}{\mu_n^c}^*$. We then have the following result.

\begin{proposition}\label{prop:prelieisos}
We endow the space $\prod\limits_{n\geq 1} \End_A(n)\otimes \mathbb{K}{\mu_n^c}^*$ with the operation,
\begin{equation*}
(f\otimes {\mu_n^c}^*)\star'' (g \otimes {\mu_m^c}^*):=\sum\limits_{i=1}^n (-1)^{|g|(1-n)+(i+1)(m-1)} (f\circ_i g)\otimes {\mu_{n+m-1}^c}^* .
\end{equation*}
Then $\star''$ defines a pre-Lie product and the isomorphisms of graded vector spaces defined above are isomorphisms of pre-Lie algebras, i.e.
\begin{equation*}
(\frakg_{\As,A},\star ) \cong (\prod\limits_{n\geq 1} \End_A(n)\otimes \mathbb{K}{\mu_n^c}^*,\star'') \cong (\prod\limits_{n\geq 1} \End_{sA}(n), \star ').
\end{equation*}
\end{proposition}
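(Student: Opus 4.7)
The plan is to verify both isomorphisms by direct computation, showing that the convolution product $\star$ and the suspension-space pre-Lie product $\star'$ both reduce, under the stated graded-vector-space bijections, to the sign-corrected formula $\star''$. A pleasant consequence is that the pre-Lie property of $\star''$ itself needs no separate verification: it is inherited from the pre-Lie property of $\star$ (or of $\star'$), since pre-Lie-ness is an equational condition preserved by graded isomorphisms.

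First I would compute $\star$ in the basis $\{\mu_n^c\}$. Because the cogenerators have degree $1-n$, the Koszul sign rule forces the infinitesimal decomposition on $\KAs$ to take the form
\begin{equation*}
\Delta_{(1)}(\mu_{n+m-1}^c) = \sum_{i=1}^{n} (-1)^{\epsilon(n,m,i)}\, \mu_n^c \circ_i \mu_m^c,
\end{equation*}
for an explicit sign $\epsilon(n,m,i)$ depending on the chosen operadic-suspension convention. Evaluating $(F \star G)(\mu_{n+m-1}^c) = \gamma_{(1)} \circ (F \circ_{(1)} G) \circ \Delta_{(1)}(\mu_{n+m-1}^c)$ then picks up an additional sign $(-1)^{|g|(1-n)}$ from commuting $G$ past the degree-$(1-n)$ element $\mu_n^c$ inside the infinitesimal composite, while $\gamma_{(1)}$ collapses the pair into $f \circ_i g \in \End_A(n+m-1)$. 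Assembling all of the signs should reproduce the weight $(-1)^{|g|(1-n)+(i+1)(m-1)}$ prescribed by $\star''$.

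Next I would compute $\star'$ in the same basis. Using the commutative square, I would extract the explicit identity $\tilde f(sa_1,\dots,sa_n) = (-1)^{(n-1)|a_1|+\dots+|a_{n-1}|}\, s\, f(a_1,\dots,a_n)$, obtained by moving $n$ factors of the degree-$(-1)$ suspension past $f$. Expanding $(\tilde f \circ_i \tilde g)(sa_1,\dots,sa_{n+m-1})$ via partial composition in $\End_{sA}$ and iteratively commuting each surviving suspension to the outside, the various Koszul signs should collapse into a single global factor which I expect to be exactly $(-1)^{|g|(1-n)+(i+1)(m-1)}\, s(f\circ_i g)(a_1,\dots,a_{n+m-1})$. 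Summing over $i=1,\dots,n$ then matches $\star''$.

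The main obstacle is purely sign bookkeeping. The excerpt adopts the Keller convention $s:A\to sA$ of degree $-1$, rather than the $+1$ convention of Loday--Vallette, so both the sign $\epsilon(n,m,i)$ in the coproduct on $\KAs$ and the signs arising when moving suspensions past homogeneous maps must be recomputed from scratch and shown to conspire to give the same exponent $|g|(1-n)+(i+1)(m-1)$ in both computations. Once this alignment is in place, the pre-Lie property of $\star''$ and the isomorphism statement both follow with no further work.
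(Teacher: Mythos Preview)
The paper does not give its own proof here: it simply cites Loday--Vallette, Proposition~10.1.16. Your proposal is exactly the direct sign computation that this reference carries out, so in substance you are doing what the paper defers to the literature for. Your observation that the pre-Lie identity for $\star''$ is automatic once either isomorphism is established is correct and saves redundant work.

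One small imprecision worth flagging: in the first step you claim the Koszul sign from commuting $G$ past $\mu_n^c$ is $(-1)^{|g|(1-n)}$. But the degree of the \emph{morphism} $G\in\Hom(\KAs(m),\End_A(m))$ is $|G|=|g|-(1-m)=|g|+m-1$, since $\mu_m^c$ has degree $1-m$; so the raw Koszul sign is $(-1)^{(|g|+m-1)(1-n)}$. The extra factor $(-1)^{(m-1)(1-n)}$ must then be absorbed into your $\epsilon(n,m,i)$ coming from $\Delta_{(1)}$ before the total exponent collapses to $|g|(1-n)+(i+1)(m-1)$. This does work out, but be careful to track it when you execute the computation.
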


\begin{proof}
See \cite{lodayvallette2012}, Proposition 10.1.16.
\end{proof}

We recall one more identification. Let $V$ be a graded vector space and $\overline{T}V=\prod_{n\geq 1}V^{\otimes n}$ the reduced (completed) tensor algebra. Equipped with the usual deconcatenation coproduct $\Delta$ defined by,
\begin{equation*}
\Delta(v_1\otimes \cdots \otimes v_n)= \sum\limits_{i=1}^{n-1} (v_1\otimes \cdots \otimes v_i)\otimes (v_{i+1}\otimes \cdots\otimes v_{n}),
\end{equation*}
$\overline{T}V$ describes a graded coalgebra. Let now $W$ denote another graded vector space. A linear map $f:(\overline{T}V,\Delta_V)\rightarrow (\overline{T}W,\Delta_W)$ is called a \emph{coalgebra morphism} if $\Delta_W \circ f=(f\otimes f)\circ \Delta_V$. A \emph{coderivation} $b$ of $\overline{T}V$ is a linear map $b:\overline{T}V\rightarrow\overline{T}V$ such that $\Delta\circ b=(b\otimes 1+1\otimes b)\circ \Delta$. The graded commutator of two coderivations is again a coderivation, and we denote by $\coder(\overline T V)$ the Lie algebra of coderivations of $\overline T V$. It is a well-known result that
there is a natural bijection between coderivations of $\overline T V$ and collections of linear maps $\prod_{n\geq 1} \Hom(V^{\otimes n},V)$ (see, for instance, \cite{lodayvallette2012}, Proposition 6.3.7.). More precisely, given a coderivation $b:\overline T V\rightarrow \overline T V$, its component mapping $V^{\otimes n}$ to $V^{\otimes u}$ is given by,
\begin{equation*}
\sum\limits_{\substack{r+s+t=n \\ u=r+1+t}} 1^{\otimes r}\otimes b_s \otimes 1^{\otimes t},
\end{equation*}
where $b_s=proj_V \circ b :V^{\otimes s}\rightarrow V$ is the projection of $b$ on the space of cogenerators. Note also that the composition $a\circ b$ of two coderivations of $\overline T V$ yields the following family of maps in $\prod_{n\geq 1}\Hom(V^{\otimes n},V)$,
\begin{equation*}
(a\circ b)_n=\sum\limits_{\substack{r+s+t=n \\ u=r+1+t}} a_u(1^{\otimes r}\otimes b_s \otimes 1^{\otimes t})=\sum\limits_{\substack{n+1=u+s\\ 1\leq i \leq u}} a_u (1^{\otimes i-1}\otimes b_s\otimes 1^{\otimes u-i})=\sum\limits_{\substack{n+1=u+s\\ 1\leq i \leq u}} {a_u\circ_i b_s}= (a \star' b)_n.
\end{equation*}
This gives an isomorphism of Lie algebras,
\begin{equation}\label{eq:coder}
\coder(\overline T V)\cong \prod\limits_{n\geq 1}\End_V(n).
\end{equation}

\begin{definition}
An element $\alpha\in \frakg_{\As,A}$ is called \emph{Maurer-Cartan element} if it is of degree one and satisfies the \emph{Maurer-Cartan equation},
\begin{equation*}
\alpha\star \alpha=0.
\end{equation*}
We denote the set of Maurer-Cartan elements of $\frakg_{\As,A}$ by $\MC(\frakg_{\As,A})$.
\end{definition}

\begin{remark}
By Proposition \ref{prop:prelieisos}, the data of a Maurer-Cartan element $\alpha\in \MC(\frakg_{\As,A})$ is equivalent to a collection $b=(b_n)_{n\geq 1}\in \prod_{n\geq 1} \End_{sA}(n)$ of degree one elements satisfying $b\star' b=0$. Equivalently, $b$ describes a degree one coderivation of $\overline T sA$ which squares to zero. This in turn translates to,
\begin{equation*}
(b\star' b)_n=(b\circ b)_n=\sum\limits_{\substack{n=r+s+t \\u:=r+1+t}} b_u (1^{\otimes r}\otimes b_s\otimes 1^{\otimes t})=0
\end{equation*}
for all $n\geq 1$. Moreover, the commutative diagram above applied to any $b_n$ gives a map $m_n:=s^{-1}b_n s^{\otimes n}:A^{\otimes n}\rightarrow A$ of degree $|m_n|=-n+1-(-1)=2-n$, and the collection $(m_n)_{n\geq 1}$ satisfies precisely the set of equations \eqref{eq:ainfty}, thus defining an $A_\infty$-structure on $A$. Denoting by $\codiff(\overline T sA)$ the set of degree one coderivations squaring to zero (also called \emph{codifferentials}), the reasoning above gives two bijections,
\begin{equation*}
\MC(\frakg_{\As,A})\cong \codiff(\overline T sA)\cong A_\infty - \text{structures on }A.
\end{equation*}
\end{remark}

\begin{remark}\label{Rmk:coalgebra}
Assume that $(\overline T sA,a)$ and $(\overline T sB,b)$ are differential coalgebras (i.e. coalgebras equipped with a codifferential). A similar reasoning as above implies that a differential coalgebra map $\varphi:\overline T sA\rightarrow \overline T sB$ (i.e. a coalgebra map that commutes with the codifferentials) of degree zero gives a family of degree zero maps $\{\varphi_n:(sA)^{\otimes n}\rightarrow sB\}_{n\geq 1}$ satisfying,
\begin{equation}\label{eq:coalgmap}
\sum \varphi_u (1^{\otimes r} \otimes {a}_s \otimes 1^{\otimes t})=\sum  {b}_r (\varphi_{i_1}\otimes \varphi_{i_2}\otimes \cdots \otimes \varphi_{i_r}),
\end{equation}
where the summations run over the same elements as in equation \eqref{eq:ainftymap}. Setting $f_n:=s^{-1}\varphi_n s^{\otimes n}$, $m_n:=s^{-1}a_n s^{\otimes n}$ and $l_n:=s^{-1} b_n s^{\otimes n}$, equation \eqref{eq:coalgmap} becomes exactly equation \eqref{eq:ainftymap}, that is, the collection $f=\{f_n\}_{n\geq 1}$ describes an $A_\infty$-morphism $f:(A,\{m_n\}_{n\geq 1})\rightarrow (B, \{l_n\}_{n\geq 1})$. This yields a bijection between differential coalgebra maps $(\overline T sA,a)\rightarrow (\overline T sB,b)$ of degree zero and $A_\infty$-morphisms $A\rightarrow B$, equipped with the corresponding $A_\infty$-structures.
\end{remark}

\begin{remark}\label{rmk:autisom}
The previous remark shows that the group of degree zero differential coalgebra automorphisms of $\overline T s A$, denoted $\Aut^0(\overline T sA)$ acts on $\codiff(\overline T sA)$ by the adjoint action, that is, $\varphi. b := \varphi \circ b \circ \varphi^{-1}$ for $b\in \codiff(\overline TsA)$ and $\varphi \in \Aut^0(\overline T sA)$. Moreover, both $b$ and $\varphi.b$ give $A_\infty$-structures on $A$ (say $m$ and $m_\varphi$). The family $\{\varphi_n:(sA)^{\otimes n}\rightarrow sA\}_{n\geq 1}$ obtained by the automorphism $\varphi$ induces an $A_\infty$-isomorphism $f:(A,m)\rightarrow (A,m_\varphi)$. We thus have a bijection,
\begin{equation*}
\codiff(\overline T sA)/\Aut^0(\overline TsA)\cong \{A_\infty-\text{structures on }A\}/\{A_\infty-\text{isomorphisms}\}.
\end{equation*}
\end{remark}

\begin{remark}
We denote the Lie subalgebra of $\coder(\overline TsA)$ given by degree zero coderivations by $\coder^0(\overline TsA)$. It acts on degree one coderivations via the adjoint action. Furthermore, it corresponds to the Lie algebra of the automorphism group $\Aut^0(\overline TsA)$ and therefore acts on the set of codifferentials via the formula,
\begin{equation*}
c.b:=e^{\ad_c}(b)=e^c\circ b\circ e^{-c},
\end{equation*}
for $c\in \coder^0(\overline TsA)$, $b\in \codiff(\overline TsA)$ and $\ad_c(-)=[c,-]$.
\end{remark}

\subsection{Gauge equivalences}\label{subsection:gaugeequivalence}
A clear exposition of the material below can be found in W. M. Goldman and J. J. Millson's paper \cite{Goldman1988}. We follow the more concise Appendix B of \cite{Dolgushev2012}. 
The graded Lie algebra $\frakg_{\As,A}$ has a natural descending filtration given by,
\begin{equation*}
\mf^p\frakg_{\As,A}:=\prod\limits_{n\geq p+1} \Hom(\KAs(n),\End_A(n)).
\end{equation*}
This filtration is complete and compatible with the Lie bracket, that is,
\begin{equation*}
\frakg_{\As,A}=\varprojlim\frakg_{\As,A}/\mf^p\frakg_{\As,A} \text{ and } [\mf^p\frakg_{\As,A},\mf^q\frakg_{\As,A}]\subset \mf^{p+q}\frakg_{\As,A}.
\end{equation*}
Moreover, $\mf^0\frakg_{\As,A}=\frakg_{\As,A}$. The degree zero elements $\frakg^0_{\As,A}$ form a Lie subalgebra of $\frakg_{\As,A}$ and the completeness condition ensures that $\frakg^0_{\As,A}$ is a pro-nilpotent Lie algebra. It may thus be exponentiated to the pro-unipotent group $\exp(\frakg^0_{\As,A})$ which consists of the set $\frakg^0_{\As,A}$ equipped with the Baker-Campbell-Hausdorff  product $\bch$. It acts on the set of degree one elements $\frakg^1_{\As,A}$ via the formula,
\begin{equation*}
\xi.\alpha:=\exp (\ad_{\xi})\alpha=\alpha+\ad_\xi (\alpha)+\frac{1}{2!} \ad^2_\xi (\alpha)+\dots
\end{equation*}
for $\xi \in \exp(\frakg^0_{\As,A})$ and $\alpha \in \frakg^1_{\As,A}$. Again, completeness allows us to make sense of the series above. Notice also that this is indeed a group action since $\exp(\ad_{\xi_1})\exp(\ad_{\xi_2})=\exp(\ad_{\bch(\xi_1,\xi_2)})$. We then have the following well-known result.

\begin{lemma}
The action of $\exp(\frakg^0_{\As,A})$ on $\frakg^1_{\As,A}$ preserves the set of Maurer-Cartan elements $\MC(\frakg_{\As,A})$.
\end{lemma}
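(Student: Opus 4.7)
The plan is to reduce the Maurer-Cartan condition to the bracket form $[\alpha,\alpha]=0$, introduce a one-parameter family $\alpha(t):=\exp(t\,\ad_\xi)\alpha$, and show by a graded Jacobi argument that $F(t):=[\alpha(t),\alpha(t)]$ satisfies a linear first-order ODE in $\frakg_{\As,A}$ with vanishing initial condition. Evaluated at $t=1$ this yields $[\xi.\alpha,\xi.\alpha]=0$.

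First I would pass from the pre-Lie formulation to the bracket formulation. For $\alpha\in\frakg^1_{\As,A}$, graded antisymmetry gives $[\alpha,\alpha]=\alpha\star\alpha-(-1)^{|\alpha|^2}\alpha\star\alpha=2\,\alpha\star\alpha$ since $|\alpha|=1$ is odd. Working over a field of characteristic zero, the Maurer-Cartan equation $\alpha\star\alpha=0$ is therefore equivalent to $[\alpha,\alpha]=0$, so it suffices to check that the bracket of $\xi.\alpha$ with itself vanishes.

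Next, I would introduce $\alpha(t):=\exp(t\,\ad_\xi)\alpha=\sum_{k\ge 0}\frac{t^k}{k!}\ad_\xi^k(\alpha)$. This series converges in $\frakg_{\As,A}$ because the filtration $\{\mathcal{F}^p\frakg_{\As,A}\}_p$ is complete and $[\mathcal{F}^p,\mathcal{F}^q]\subset\mathcal{F}^{p+q}$, so $\ad_\xi$ is locally nilpotent modulo every $\mathcal{F}^p$; in particular the family $t\mapsto\alpha(t)$ is differentiable and satisfies $\frac{d}{dt}\alpha(t)=\ad_\xi\alpha(t)$ with $\alpha(0)=\alpha$. Now using that $\ad_\xi$ is a graded derivation of the Lie bracket (the graded Jacobi identity),
\begin{equation*}
\frac{d}{dt}[\alpha(t),\alpha(t)]=[\ad_\xi\alpha(t),\alpha(t)]+[\alpha(t),\ad_\xi\alpha(t)]=\ad_\xi[\alpha(t),\alpha(t)].
\end{equation*}
Thus $F(t):=[\alpha(t),\alpha(t)]$ solves the linear ODE $F'(t)=\ad_\xi F(t)$ with $F(0)=[\alpha,\alpha]=0$. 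By the same completeness argument, the unique solution is $F(t)=\exp(t\,\ad_\xi)F(0)=0$ for all $t$, and at $t=1$ we conclude $[\xi.\alpha,\xi.\alpha]=0$, i.e. $\xi.\alpha\in\MC(\frakg_{\As,A})$.

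The only real subtleties to watch for are the sign in the graded antisymmetry (degree $|\alpha|=1$ makes the bracket symmetric on $\alpha$), and the justification that the formal power series manipulations above are rigorous; both are handled by the pro-nilpotency coming from the complete descending filtration on $\frakg_{\As,A}$, so no genuine analytic obstacle arises.
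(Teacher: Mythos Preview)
Your argument is correct: reducing the Maurer-Cartan condition to $[\alpha,\alpha]=0$ (using $|\alpha|=1$ and characteristic zero), then using that $\ad_\xi$ is a degree-zero derivation of the bracket so that $F(t)=[\alpha(t),\alpha(t)]$ solves $F'=\ad_\xi F$ with $F(0)=0$, is a clean and standard proof; the completeness of the filtration indeed justifies all the series manipulations. The paper itself does not give an argument at all but simply refers to Section~1 of Goldman--Millson, so your write-up in fact supplies what the paper omits; note also that your ODE computation is just an unwinding of the statement that $\exp(\ad_\xi)$ is a Lie algebra automorphism, which gives $[\exp(\ad_\xi)\alpha,\exp(\ad_\xi)\alpha]=\exp(\ad_\xi)[\alpha,\alpha]=0$ in one line once that automorphism property is granted.
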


\begin{proof}
We refer to Section 1 of \cite{Goldman1988}.
\end{proof}

\begin{definition}
The action of $\exp(\frakg^0_{\As,A})$ defines an equivalence relation on the set of Maurer-Cartan elements $\MC(\frakg_{\As,A})$. We say that two Maurer-Cartan elements $\alpha_1$, $\alpha_2\in \MC(\frakg_{\As,A})$ are \emph{gauge equivalent} if there is a $\xi\in \exp(\frakg^0_{\As,A})$ such that $\alpha_2=\xi. \alpha_1$.
\end{definition}

\begin{proposition}\label{prop:mcgauge}
Gauge equivalent Maurer-Cartan elements of $\frakg_{\As,A}$ correspond bijectively to $A_\infty$-isomorphic $A_\infty$-structures on $A$. We thus have the following bijections,
\begin{equation*}
\MC(\frakg_{\As,A})/\exp(\frakg^0_{\As,A})\cong\codiff(\overline T sA)/\Aut^0(\overline TsA)\cong \{A_\infty\text{-structures on }A\}/\{A_\infty\text{-isomorphisms}\}.
\end{equation*}
\end{proposition}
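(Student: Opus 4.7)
The plan is to package the identifications assembled in the preceding remarks into a single equivariant isomorphism. By Proposition \ref{prop:prelieisos} composed with the isomorphism \eqref{eq:coder}, we obtain an isomorphism of graded Lie algebras
$$\Phi\colon\frakg_{\As,A}\xrightarrow{\sim}\coder(\overline T sA),$$
which restricts in degree one to the bijection $\MC(\frakg_{\As,A})\cong \codiff(\overline T sA)$ already recorded after the Maurer--Cartan definition. The task is then to show that $\Phi$ is equivariant with respect to the two group actions involved; the bijection between $\codiff(\overline T sA)/\Aut^0(\overline T sA)$ and $A_\infty$-isomorphism classes of $A_\infty$-structures on $A$ is precisely the content of Remark \ref{rmk:autisom}.

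First I would verify that the filtration $\mf^p\frakg_{\As,A}$ corresponds under $\Phi$ to the natural filtration on $\coder(\overline T sA)$ whose $p$-th stage consists of coderivations with vanishing components $b_s$ for $s\leq p$. Both filtrations are complete and compatible with the bracket, so $\Phi$ restricts to an isomorphism of pro-nilpotent Lie algebras $\frakg^0_{\As,A}\cong \coder^0(\overline T sA)$, which exponentiates to a group isomorphism $\exp(\frakg^0_{\As,A})\cong \Aut^0(\overline T sA)$. Surjectivity onto $\Aut^0$ requires an inductive construction of a preimage $c$ filtration-quotient by filtration-quotient, exploiting that an automorphism $\varphi$ is uniquely determined by its components $\varphi_n\colon (sA)^{\otimes n}\to sA$.

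The key remaining step is the equivariance itself. For $\xi\in\frakg^0_{\As,A}$ and $\alpha\in\MC(\frakg_{\As,A})$, set $c=\Phi(\xi)$, $b=\Phi(\alpha)$, and $\varphi=e^{c}\in\Aut^0(\overline TsA)$. Since $\Phi$ is a Lie algebra isomorphism, $\Phi(\ad_\xi^k\alpha)=\ad_c^k b$ for all $k\geq 0$; summing termwise (convergence being guaranteed by the filtration) and invoking the identity $e^{\ad_c}(b)=e^c\circ b\circ e^{-c}$ recorded in the remark immediately preceding Subsection \ref{subsection:gaugeequivalence}, we obtain
$$\Phi(\xi.\alpha)\;=\;\Phi\bigl(e^{\ad_\xi}\alpha\bigr)\;=\;e^{\ad_c}(b)\;=\;\varphi\circ b\circ\varphi^{-1}.$$
Hence $\Phi$ intertwines the gauge action of $\exp(\frakg^0_{\As,A})$ on $\MC(\frakg_{\As,A})$ with the adjoint action of $\Aut^0(\overline TsA)$ on $\codiff(\overline TsA)$, and descends to the desired bijection of orbit spaces. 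The main obstacle is bookkeeping rather than conceptual: carefully tracking signs and degree shifts through Proposition \ref{prop:prelieisos} and \eqref{eq:coder} to confirm that $\Phi$ identifies the two brackets on the nose, and verifying the surjectivity of the exponential via the filtration-quotient argument just sketched.
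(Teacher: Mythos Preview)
Your proposal is correct and follows essentially the same route as the paper: use the Lie algebra isomorphism furnished by Proposition~\ref{prop:prelieisos} and \eqref{eq:coder}, exponentiate the degree-zero part to identify $\exp(\frakg^0_{\As,A})$ with $\Aut^0(\overline TsA)$, and then invoke Remark~\ref{rmk:autisom}. The paper's own proof is terser and does not spell out the filtration argument or the equivariance computation, but the underlying logic is identical.
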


\begin{proof}
The identifications in Proposition \ref{prop:prelieisos} and the isomorphism \eqref{eq:coder} give an isomorphism of Lie algebras,
\begin{equation*}
\frakg^0_{\As,A}\cong \coder^0(\overline TsA),
\end{equation*}
which preserves the respective adjoint actions on $\frakg^1_{\As,A}\cong \coder^1(\overline TsA)$. Moreover, this allows us to identify $\exp(\frakg^0_{\As,A})$ with the group of automorphisms $\Aut^0(\overline TsA)$ such that the respective actions on $\MC(\frakg_{\As,A})\cong \codiff(\overline TsA)$ coincide. Thus,
\begin{equation*}
\MC(\frakg_{\As,A})/\exp(\frakg^0_{\As,A})\cong\codiff(\overline T sA)/\Aut^0(\overline TsA).
\end{equation*}
which, together with Remark \ref{rmk:autisom}, implies the statement.
\end{proof}

We finish this section by recalling a useful technical lemma.

\begin{lemma}\label{lemma:technical}
Let $\fraka$ be a graded Lie algebra. Assume that there is a second positive grading on $\fraka$ compatible with the Lie algebra structure, i.e.
\begin{equation*}
\fraka=\bigoplus\limits_{i\geq 1}\fraka^{(i)} \text{ and } [\fraka^{(i)},\fraka^{(j)}]\subset \fraka^{(i+j)}.
\end{equation*}
Let $\alpha\in \MC(\fraka)$ be a Maurer-Cartan element of $\fraka$. Decompose $\alpha$ with respect to the second grading, that is,
\begin{equation*}
\alpha=\alpha^{(1)}+\alpha^{(k)}+\alpha^{(k+1)}+\dots
\end{equation*}
where $\alpha^{(i)}\in \fraka^{(i)}$ and $k\geq 2$. Then $\alpha^{(1)}\in \MC(\fraka)$, the bracket $[\alpha^{(1)},-]$ defines a differential on $\fraka$ and if the cohomology class $[\alpha^{(k)}]\neq 0 \in H^1(\fraka,[\alpha^{(1)},-])$, the Maurer-Cartan elements $\alpha$ and $\alpha^{(1)}$ are not gauge equivalent.
\end{lemma}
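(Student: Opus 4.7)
The plan is to verify the three assertions in order; the first two are formal consequences of expanding $[\alpha,\alpha]=0$ by weight, while the real content lies in the gauge-inequivalence claim.

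First I would expand $[\alpha,\alpha]=0$ according to the second grading. Since the bracket respects that grading and $\alpha^{(i)}$ has weight $i$, the weight-$m$ component reads $\sum_{i+j=m}[\alpha^{(i)},\alpha^{(j)}]=0$. Taking $m=2$ yields $[\alpha^{(1)},\alpha^{(1)}]=0$, so $\alpha^{(1)}\in\MC(\fraka)$, and graded Jacobi gives $d:=[\alpha^{(1)},-]$ squaring to zero and thus defining a differential of cohomological degree one. Looking at weight $m=k+1$, where the only nonzero contributions come from $(i,j)=(1,k)$ and $(k,1)$ (since $\alpha^{(i)}=0$ for $1<i<k$), I obtain $[\alpha^{(1)},\alpha^{(k)}]=0$, so $\alpha^{(k)}$ is a $d$-cocycle and the class $[\alpha^{(k)}]\in H^1(\fraka,d)$ is well-defined.

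For the main claim I would argue by contradiction. Assume there is $\xi\in\fraka^0$ with $\alpha=e^{\ad_\xi}(\alpha^{(1)})$, decompose $\xi=\sum_{i\geq 1}\xi_i$ with $\xi_i\in\fraka^{(i)}$ of cohomological degree $0$, and expand the exponential. Projecting the identity $\alpha-\alpha^{(1)}=e^{\ad_\xi}(\alpha^{(1)})-\alpha^{(1)}$ onto weight $j\geq 2$ yields
\begin{equation*}
(\alpha-\alpha^{(1)})^{(j)}=\sum_{r\geq 1}\frac{1}{r!}\sum_{\substack{i_1+\cdots+i_r=j-1\\ i_\ell\geq 1}}[\xi_{i_1},[\xi_{i_2},\ldots,[\xi_{i_r},\alpha^{(1)}]\cdots]],
\end{equation*}
whose left-hand side is $0$ for $2\leq j<k$ and is $\alpha^{(k)}$ for $j=k$. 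The combinatorial key is that when $r\geq 2$ and all $i_\ell\geq 1$ sum to $j-1$, each $i_\ell\leq j-r\leq j-2$. By induction on $j=2,3,\ldots,k-1$ I would then show $[\xi_{j-1},\alpha^{(1)}]=0$: the inductive hypothesis kills every $r\geq 2$ contribution since the innermost bracket $[\xi_{i_r},\alpha^{(1)}]$ has $i_r\leq j-2$, leaving only the $r=1$ term $[\xi_{j-1},\alpha^{(1)}]=0$.

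Feeding this back into the $j=k$ equation collapses all $r\geq 2$ contributions in the same way, so
\begin{equation*}
\alpha^{(k)}=[\xi_{k-1},\alpha^{(1)}]=-[\alpha^{(1)},\xi_{k-1}]=-d\xi_{k-1},
\end{equation*}
contradicting $[\alpha^{(k)}]\neq 0$ in $H^1(\fraka,d)$. The only genuine obstacle is the bookkeeping in the induction: one must check that the exponential of $\ad_\xi$ decomposes cleanly weight-by-weight (despite $\xi$ potentially having infinitely many components $\xi_i$, each weight-$j$ analysis involves only finitely many of them) and that the estimate $i_\ell\leq j-2$ really forces every nested bracket with $r\geq 2$ to vanish via the inductive hypothesis. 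Everything else is formal manipulation.
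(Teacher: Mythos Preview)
Your proof is correct and follows essentially the same approach as the paper: expand the Maurer-Cartan equation and the gauge equation weight-by-weight, then induct on the weight to force $[\xi_{j-1},\alpha^{(1)}]=0$ for $j<k$ and hence $\alpha^{(k)}=[\xi_{k-1},\alpha^{(1)}]$. You have in fact supplied more detail than the paper does (in particular the explicit verification that $\alpha^{(k)}$ is a $d$-cocycle and the careful justification of why the $r\geq 2$ terms die), but the underlying argument is identical.
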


\begin{proof}
The low order expansion of the Maurer-Cartan equation for $\alpha$ reads,
\begin{equation*}
0=\frac{1}{2}[\alpha,\alpha]=\frac{1}{2}[\alpha^{(1)},\alpha^{(1)}]+[\alpha^{(1)},\alpha^{(k)}]+\dots.
\end{equation*}
Since $[\alpha^{(1)},\alpha^{(1)}]$ is the only contribution to $\alpha^{(2)}$, it must equal zero. Thus $\alpha^{(1)}\in \MC(\fraka)$. Together with the Jacobi identity this implies $[\alpha^{(1)},[\alpha^{(1)},-]]=0$, i.e. bracketing with $\alpha^{(1)}$ defines a differential on $\fraka$. Next, assume $[\alpha^{(k)}]\neq 0 \in H^1(\fraka,[\alpha^{(1)},-])$ and that $\alpha$ and $\alpha^{(1)}$ are gauge equivalent Maurer-Cartan elements, that is, there exists $b\in \fraka$ of degree zero such that $\alpha=e^{\ad(b)} \alpha^{(1)}$. Expanding this equation with respect to the second grading yields inductively,
\begin{equation*}
[b^{(j)},\alpha^{(1)}]=0,
\end{equation*}
for $1\leq j\leq k-2$, and $[b^{(k-1)},\alpha^{(1)}]=\alpha^{(k)}$. But then $[\alpha^{(k)}]=0 \in H^1(\fraka,[\alpha^{(1)},-])$, leading to a contradiction. Therefore, $\alpha$ and $\alpha^{(1)}$ cannot be gauge equivalent.
\end{proof}

\subsection{The Duflo isomorphism}\label{section:duflo}
We follow D. Calaque and C. A. Rossi's lecture series \cite{RossiCalaque}. Let $\frakg$ be a finite dimensional Lie algebra. It acts on the symmetric algebra $S\frakg$ and the universal enveloping algebra $U\frakg$ by the adjoint action. Consider the formal power series on $\frakg$ given by the (modified) Duflo element,
\begin{equation*}
J(x):=\det\left(\cfrac{e^{\ad_x/2}-e^{-\ad_x/2}}{\ad_x}\right).
\end{equation*}
Moreover, recall the symmetrization map,
\begin{equation*}
\Sym:S\frakg \rightarrow U\frakg,\hspace{1cm} v_1\cdots v_n\mapsto \frac{1}{n!}\sum\limits_{\sigma \in S_n}v_{\sigma(1)}\cdots  v_{\sigma(n)}.
\end{equation*}
It is an isomorphism of filtered vector spaces, but not an algebra isomorphism (the product on $S\frakg$ being commutative, while the one on $U\frakg$ is not unless $\frakg$ is abelian). M. Duflo's theorem \cite{Duflo77} states that the composition,
\begin{equation*}
\Duf:=\Sym\circ J^{1/2}:(S\frakg)^{\frakg} \rightarrow (U\frakg)^{\frakg}
\end{equation*}
defines an algebra isomorphism on $\frakg$-invariant elements. Here, we identify $x\in \frakg$ with the vector field $\frac{\partial}{\partial x^*}$. It acts by derivation on $S\frakg$, which may be viewed as polynomial functions on $\frakg^*$. In this way, we may view the formal power series $J^{1/2}(x)$ on $\frakg$ as an infinite-order differential operator $J^{1/2}$ on $\frakg^*$.

By pulling back the product $m_{U\frakg}$ on $U\frakg$ to $S\frakg$ via the Duflo isomorphism, we obtain a second associative product,
\begin{equation*}
m_{\Duf}:=\Duf^{-1}\circ m_{U\frakg} \circ \Duf^{\otimes 2}:S\frakg^{\otimes 2}\rightarrow S\frakg
\end{equation*}
on the symmetric algebra. On invariant elements $m_{\Duf}$ coincides with the usual commutative product $m_0$.

Next, consider the Chevalley-Eilenberg complexes, $(C^\bullet(\frakg,S\frakg)=S\frakg\otimes \bigwedge^\bullet \frakg^*,d_{S\frakg\otimes \wedge \frakg^*})$ and $(C^\bullet (\frakg,U\frakg)=U\frakg\otimes \bigwedge^\bullet \frakg^*,d_{U\frakg\otimes \wedge \frakg^*})$ \cite{Chevalley1948}. Results by B. Shoikhet \cite{Shoikhet2003} and M. Pevzner and C. Torossian \cite{PevznerTorossian2004} show that the map $\Duf\otimes \id:S\frakg\otimes \bigwedge \frakg^*\rightarrow U\frakg\otimes \bigwedge \frakg^*$ induces an isomorphism of algebras on the level of cohomology. Moreover, this allows us to define two associative products on $S\frakg\otimes \bigwedge \frakg^*$, namely,
\begin{align*}
\tilde m_0:&(S\frakg\otimes \bigwedge \frakg^*)^{\otimes 2}\cong (S\frakg)^{\otimes 2} \otimes (\bigwedge \frakg^*)^{\otimes 2}\overset{m_0\otimes m_{\wedge\frakg^*}}{\longrightarrow} S\frakg\otimes \bigwedge \frakg^*\\
\tilde m_\Duf:&(S\frakg\otimes \bigwedge \frakg^*)^{\otimes 2}\cong (S\frakg)^{\otimes 2} \otimes (\bigwedge \frakg^*)^{\otimes 2}\overset{m_\Duf\otimes m_{\wedge\frakg^*}}{\longrightarrow} S\frakg\otimes \bigwedge \frakg^*
\end{align*}
where $m_{\wedge\frakg^*}$ denotes the usual graded anticommutative product on $\bigwedge \frakg^*$.

A natural question at this point is whether the map $\Duf\otimes \id$ may be extended to an $A_\infty$-isomorphism in a universal way (i.e. independent of the specific choice of the Lie algebra $\frakg$). It has been answered in the negative by J. Alm ((\cite{Alm2011}, Remark 4.0.1), (\cite{Alm2014}, Proposition 5.3.0.10), see also J. Alm and S. Merkulov's paper \cite{AlmMerkulov}). The aim of this text is to describe a self-contained and elementary proof of this fact. The non-existence of such an $A_\infty$-isomorphism is equivalent to the following statement.

\begin{theorem}\label{thm:maintheoreom}
There does not exist a universal $A_\infty$-isomorphism of strict $A_\infty$-algebras,
\begin{equation*}
f:(S\frakg\otimes \bigwedge \frakg^*,(d_{S\frakg\otimes \wedge \frakg^*},\tilde m_0))\rightarrow (S\frakg\otimes \bigwedge \frakg^*,(d_{S\frakg\otimes \wedge \frakg^*},\tilde m_\Duf)),
\end{equation*}
whose first component $f_1$ is the identity.
\end{theorem}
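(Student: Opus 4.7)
The plan is to translate the statement into the non-existence of a gauge equivalence between two Maurer-Cartan elements of a graded Lie algebra of directed graphs, and then extract an obstruction via Lemma \ref{lemma:technical}. The set-up requires building the graph complex $\dgraphs(n)$ promised in the introduction together with a morphism of graded Lie algebras
\[
B : \prod_{n \geq 1} \dgraphs(n) \longrightarrow \frakg_{\As, A}, \qquad A := S\frakg \otimes \bigwedge \frakg^*,
\]
obtained in the Kontsevich style: external vertices encode the inputs in $C(\frakg,S\frakg)^{\otimes n}$, internal trivalent vertices contribute copies of the Jacobi tensor of $\frakg$, edges perform tensor contractions, and the defining relations of $\dgraphs$ are precisely those forced by antisymmetry and the Jacobi identity. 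The word ``universal'' in the statement translates to ``lying in the image of $B$''.

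With this set-up, Proposition \ref{prop:mcgauge} reduces the theorem to the following claim: the two Maurer-Cartan elements $\alpha_0, \alpha_\Duf \in \MC\bigl(\prod_n \dgraphs(n)\bigr)$ inducing respectively $(d_{S\frakg \otimes \wedge\frakg^*}, \tilde m_0)$ and $(d_{S\frakg \otimes \wedge\frakg^*}, \tilde m_\Duf)$ are \emph{not} gauge equivalent. I would make both explicit: $\alpha_0$ consists only of the very small graphs encoding the Chevalley differential and the commutative product, while $\alpha_\Duf$ is the graphical avatar of Kontsevich's star product on $\frakg^*$ (through the identification $S\frakg \cong \mathrm{Pol}(\frakg^*)$) corrected by the Duflo pre-factor $J^{1/2}$, and therefore expands as $\alpha_0$ plus an infinite series of higher graphs.

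The main combinatorial step is to endow $\prod_n \dgraphs(n)$ with a second positive grading compatible with the Lie bracket, for instance by the total number of internal vertices, chosen in such a way that $\alpha_0$ is exactly the lowest-degree part of $\alpha_\Duf$. Writing $\alpha_\Duf = \alpha_0 + \alpha_\Duf^{(k)} + \cdots$ with respect to this grading, Lemma \ref{lemma:technical} reduces the non-equivalence to verifying that $\alpha_\Duf^{(k)}$ represents a nonzero class in $H^1\bigl(\prod_n \dgraphs(n), [\alpha_0, -]\bigr)$. I would then identify $\alpha_\Duf^{(k)}$ with a nonzero multiple of the tripod graph displayed in the introduction (the unique relevant contribution coming from the first-order Kontsevich weight on three inputs), check by a direct computation against the defining relations of $\dgraphs$ that this graph is $[\alpha_0, -]$-closed, and finally check that it is not exact.

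The main obstacle is this last non-exactness step. Because of the trivalence restriction on internal vertices and the degree conventions, the pool of candidate primitives is very small: degree-zero graphs on three external vertices with one or two internal trivalent vertices. I would enumerate them explicitly and show, using the Jacobi/IHX-type relations that are forced upon $\dgraphs$ by the kernel of $B$, that no linear combination of such graphs brackets with $\alpha_0$ to produce the tripod; every potential candidate either collapses by antisymmetry or hits a graph orthogonal to the tripod under the combinatorial decomposition of $[\alpha_0, -]$. Once this finite enumeration is complete, Lemma \ref{lemma:technical} applies directly and yields that $\alpha_\Duf$ and $\alpha_0$ are not gauge equivalent, which, via Proposition \ref{prop:mcgauge} and the universality translation, proves Theorem \ref{thm:maintheoreom}.
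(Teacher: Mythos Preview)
Your overall architecture is the same as the paper's: reduce to gauge non-equivalence of Maurer-Cartan elements in the graphical convolution Lie algebra and then invoke Lemma \ref{lemma:technical}. However, there is a genuine gap in the combinatorial core.

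First, the grading you suggest does not do what you claim. Grading by the number of internal vertices leaves $\alpha_0=a_1+a_2$ inhomogeneous: the product graph $a_2$ has no internal vertex while the Chevalley--Eilenberg graph $a_1$ has one. The paper instead grades by $(\#\text{vertices})-1$, which puts both $a_1$ and $a_2$ in degree~$1$.

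Second, and more seriously, even with the correct grading the first correction to $\alpha_\Duf$ beyond $\alpha_0$ is \emph{not} the tripod. Since the $A_\infty$-structure $(d_A,\tilde m_\Duf,0,\dots)$ is strict, $\alpha_\Duf$ has no component in $\dgraphs(3)$ whatsoever; all its higher terms live in $\dgraphs(2)$ and come from Kontsevich's binary star product. The leading correction is $\tfrac12 a_3$, the single-wedge graph in $\dgraphs(2)$, and this class is $[\alpha_0,-]$-\emph{exact} (indeed $[\xi_1,a_1]=2a_3$ in the paper's notation). So Lemma \ref{lemma:technical} applied directly to $\alpha_\Duf$ yields no obstruction at the first step.

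The paper overcomes this by performing an explicit preliminary gauge transformation $\alpha_\Duf\mapsto\alpha'_\Duf=e^{\ad_\xi}\alpha_\Duf$ which kills the exact degree-$2$ part and several degree-$3$ terms in $\dgraphs(2)$; the tripod $b\in\dgraphs(3)$ then appears as the residual degree-$3$ part of $\alpha'_\Duf$, produced by commutators such as $[\xi_2,a_2]$ and $[\xi_3,a_2]$. Only after this step can Lemma \ref{lemma:technical} be invoked. Your proposal skips this, and your sentence about ``the first-order Kontsevich weight on three inputs'' has no meaning here since there are no ternary operations in the strict structure.

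Finally, for the non-exactness of $b$ under $\ad_{a_1+a_2}$, your finite enumeration of primitives with three external vertices is insufficient: $\ad_{a_2}$ shifts the number of external vertices, so a primitive could also live in $\dgraphs(2)$ or $\dgraphs(4)$. The paper avoids a brute enumeration by first computing $H(\frakg_{\As,\dgraphs},\ad_{a_2})$ via the cobar complex $\Omega P_n^{(1,\dots,1)}$, and then replacing $b$ by a cohomologous element $b'\in\dgraphs(4)$ with \emph{no} internal vertices, which manifestly cannot lie in the image of $\ad_{a_1}$.
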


\subsection{Kontsevich's product}
Let $x_1,\dots,x_d$ be a basis of $\frakg$ and denote by $c_{ij}^k\in \mathbb{K}$ its structure constants. Set $A:=S\frakg\otimes \bigwedge \frakg^*$. When equipped with the product $\tilde m_0$ we may identify $A$ with the polynomial algebra $\mathbb{K}[x_1,\dots,x_d,p^1,\dots, p^d]$, where $p^1,\dots,p^d$ are of degree 1 and describe the (degree shifted) dual basis. The $x_i$ are set to have degree 0. To describe the product $\tilde m_\Duf$ induced by the Duflo isomorphism on $A$, recall that in \cite{Kontsevich2003}, M. Kontsevich gave an explicit universal formula for the deformation quantization of any Poisson manifold. In particular, this can be applied to the dual Lie algebra $\frakg^*$ which defines a Poisson manifold with a linear Poisson structure. If we think of $x_1,\dots,x_d$ as local coordinates on $\frakg^*$, then for $f,g\in C^\infty (\frakg^*)$, the Poisson bracket is given by,
\begin{equation*}
\{f,g\}:=c^k_{ij}x_k\frac{\partial f}{\partial x_i}\frac{\partial g}{\partial x_j}.
\end{equation*}
It extends the linear Poisson structure on $\frakg^*$. We set $\pi_{ij}=c^k_{ij}x_k$ and 
\begin{equation*}
\pi:=\pi_{ij}p^i p^j=c^k_{ij}x_kp^i p^j\in A.
\end{equation*}
The latter can be identified with the Poisson bivector field on $\frakg^*$ given by $\pi_{ij}\frac{\partial}{\partial x_i}\wedge \frac{\partial}{\partial x_j}$ in local coordinates.
\begin{definition}
A deformation quantization of $\frakg^*$ is a $\mathbb{K}[[\epsilon]]$-linear, associative product $m_\pi$ on $C^\infty(\frakg^*) [[\epsilon]]$ such that for all $f,g\in C^\infty(\frakg^*)$,
\begin{equation*}
m_\pi(f\otimes g)=fg+\frac{\epsilon}{2}\{f,g\}+O(\epsilon^2).
\end{equation*}
\end{definition}

\begin{remark}
We shall not recall M. Kontsevich's construction at this point. Note however that (by formally setting $\epsilon=1$) it yields one further associative product $m_\pi$ on the space of polynomial functions on $\frakg^*$, i.e. on $S\frakg=\mathbb{K}[x_1,\dots,x_d]$. This product corresponds precisely to the one induced by the Duflo isomorphism (see \cite{Kontsevich2003},\cite{Shoiket2001}), that is,
\begin{equation*}
m_\pi=m_\Duf.
\end{equation*}
Moreover, if we define the product $\tilde m_\pi$ on $A$ in an analogous way as we did for $\tilde m_\Duf$, we also have $\tilde m_\pi=\tilde m_\Duf$.
\end{remark}

\section{A variant of M. Kontsevich's graph complex}

We consider the following version of M. Kontsevich's graph complex (\cite{Kontsevich2003}, Section 6.1). 

\begin{definition}
An \emph{admissible directed graph} is a directed graph $\Gamma$ with labeled vertices $1,2,\dots, n$ (called external), possibly other vertices (unlabeled and called internal) satisfying the following properties:
\begin{enumerate}\label{def:admissible}
\item{There is a linear order on the set of edges.}
\item{$\Gamma$ has no double edges, nor simple loops (edges connecting a vertex with itself).}
\item{Every internal vertex is at most trivalent.}
\item{Every internal vertex has at most one incoming edge, and at most two outgoing edges.}
\item{Every internal vertex can be connected by a path with an external vertex.}
\end{enumerate}
\end{definition}
Let $\dgr(n)$ be the vector space spanned by finite linear combinations of admissible directed graphs with $n$ external vertices, modulo the relation $\Gamma^\sigma=(-1)^{|\sigma|} \Gamma$, where $\Gamma^{\sigma}$ differs from $\Gamma$ by a permutation $\sigma$ on the order of edges. Here $|\sigma|$ denotes the parity of the permutation $\sigma$.

\begin {center}
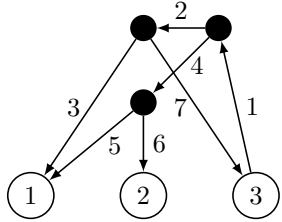
\begin{figure}[ht]
\resizebox{!}{3cm}{
\begin {tikzpicture}[-latex, auto ,node distance =1cm and 1cm ,on grid ,
semithick];

\tikzstyle{vertex1}=[circle,minimum size=2mm,draw=black,fill=black];
\tikzstyle{vertex2}=[circle,minimum size=2mm,draw=black,fill=white];
\tikzset{edge/.style = {-}};

\node[vertex2] (1) {$1$};
\node[vertex2] (2) [right=of 1,xshift=0.5cm] {$2$};
\node[vertex2] (3) [right=of 2,xshift=0.5cm] {$3$};

\node[vertex1] (4) [above=of 2,yshift=0.25cm] {};
\node[vertex1] (5) [above=of 4,xshift=0cm] {};
\node[vertex1] (6) [above=of 4,xshift=1cm] {};

\path (3) edge [] node[right] {$1$} (6);
\path (6) edge [] node[above] {$2$} (5);
\path (5) edge [] node[left] {$3$}(1);
\path (6) edge [] node[right] {$4$} (4);
\path (4) edge [] node[right,xshift=0.1cm] {$5$} (1);
\path (4) edge [] node[right] {$6$} (2);
\path (5) edge [] node[left] {$7$} (3);

\end{tikzpicture}
}
\caption{A graph in $\dgr(3)$.\label{fig:dgraphsexample}}
\end{figure}
\end{center}

\begin{definition}
\begin{enumerate}
\item{Let $\Gamma$ be an admissible directed graph and fix any one of its internal vertices. Call it $v$. Consider the linear combination obtained by replacing $v$ by two vertices connected by a directed edge $e$ and summing over all possible ways of reconnecting the edges previously adjacent to $v$ to the endpoints of $e$ while creating only admissible directed graphs.}
\item{Let $\Gamma$ be a directed graph as in Definition \ref{def:admissible} but with one four-valent internal vertex $v$ with one incoming and three outgoing edges. Consider the linear combination obtained by replacing $v$ by two vertices connected by a directed edge $e$ and summing over all possible ways of reconnecting the edges previously adjacent to $v$ to the endpoints of $e$ while creating only admissible directed graphs (see Figure \ref{fig:IHX}).}
\end{enumerate}
In both cases, the order of the set of edges of the new graphs is given by placing the newly added edge last. The \emph{generalized IHX relations} are given by setting such linear combinations equal to zero.
\end{definition}

\begin {center}
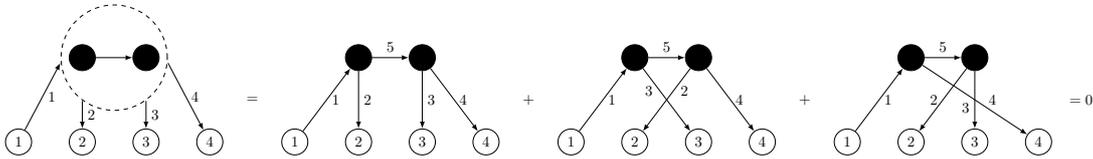
\begin{figure}[ht]
\resizebox{!}{2cm}{
\begin {tikzpicture}[-latex, auto ,node distance =1cm and 1cm ,on grid ,
semithick];

\tikzstyle{vertex1}=[circle,minimum size=2mm,draw=black,fill=black];
\tikzstyle{vertex2}=[circle,minimum size=2mm,draw=black,fill=white];
\tikzset{edge/.style = {-}};

\node[vertex2] (1) {$1$};
\node[vertex2] (2) [right=of 1,xshift=0.5cm] {$2$};
\node[vertex2] (3) [right=of 2,xshift=0.5cm] {$3$};
\node[vertex2] (4) [right=of 3,xshift=0.5cm] {$4$};

\node[vertex1] (5) [above=of 2,yshift=1cm] {$3$};
\node[vertex1] (6) [above=of 3,yshift=1cm] {$4$};

\draw[dashed] (2.25,2) circle (1.25cm);

\node (7) [left=of 5,xshift=0.55cm] {};
\node (8) [above=of 2,yshift=0.1cm] {};
\node (9) [above=of 3,yshift=0.1cm] {};
\node (10) [right=of 6,xshift=-0.55cm] {};

\path (1) edge [] node[right] {$1$} (7);
\path (8) edge [] node[right] {$2$} (2);
\path (9) edge [] node[right] {$3$} (3);
\path (10) edge [] node[right] {$4$} (4);

\path (5) edge [] node[above] {} (6);

\node (11) [right=of 4,yshift=1cm] {$=$};


\node[vertex2] (12) [right=of 11,yshift=-1cm] {$1$};
\node[vertex2] (13) [right=of 12,xshift=0.5cm] {$2$};
\node[vertex2] (14) [right=of 13,xshift=0.5cm] {$3$};
\node[vertex2] (15) [right=of 14,xshift=0.5cm] {$4$};

\node[vertex1] (16) [above=of 13,yshift=1cm] {$3$};
\node[vertex1] (17) [above=of 14,yshift=1cm] {$4$};

\path (12) edge [] node[right] {$1$} (16);
\path (16) edge [] node[right] {$2$} (13);
\path (17) edge [] node[right] {$3$} (14);
\path (17) edge [] node[right] {$4$} (15);

\path (16) edge [] node[above] {$5$} (17);

\node (18) [right=of 15,yshift=1cm] {$+$};


\node[vertex2] (19) [right=of 18,yshift=-1cm] {$1$};
\node[vertex2] (20) [right=of 19,xshift=0.5cm] {$2$};
\node[vertex2] (21) [right=of 20,xshift=0.5cm] {$3$};
\node[vertex2] (22) [right=of 21,xshift=0.5cm] {$4$};

\node[vertex1] (23) [above=of 20,yshift=1cm] {$3$};
\node[vertex1] (24) [above=of 21,yshift=1cm] {$4$};

\path (19) edge [] node[right] {$1$} (23);
\path (24) edge [] node[right,xshift=0.2cm,yshift=0.2cm] {$2$} (20);
\path (23) edge [] node[left,xshift=-0.2cm,yshift=0.2cm] {$3$} (21);
\path (24) edge [] node[right] {$4$} (22);

\path (23) edge [] node[above] {$5$} (24);

\node (25) [right=of 22,yshift=1cm] {$+$};


\node[vertex2] (26) [right=of 25,yshift=-1cm] {$1$};
\node[vertex2] (27) [right=of 26,xshift=0.5cm] {$2$};
\node[vertex2] (28) [right=of 27,xshift=0.5cm] {$3$};
\node[vertex2] (29) [right=of 28,xshift=0.5cm] {$4$};

\node[vertex1] (30) [above=of 27,yshift=1cm] {$3$};
\node[vertex1] (31) [above=of 28,yshift=1cm] {$4$};

\path (26) edge [] node[right] {$1$} (30);
\path (31) edge [] node[left] {$2$} (27);
\path (31) edge [] node[left,yshift=-0.2cm] {$3$} (28);
\path (30) edge [] node[right,xshift=0.2cm] {$4$} (29);

\path (30) edge [] node[above] {$5$} (31);

\node (32) [right=of 29,yshift=1cm] {$=0$};

\end{tikzpicture}
}
\caption{An example of the generalized IHX relations.\label{fig:IHX}}
\end{figure}
\end{center}

Our main object of study will be the collection of quotients ($n\geq 1$),
\begin{equation*}
\dgraphs(n):=\dgr(n)/\text{generalized IHX relations}.
\end{equation*}

\begin{remark}
For each $n\geq 1$, $\dgraphs(n)$ defines a graded vector space. The degree of a graph $\Gamma\in \dgraphs(n)$ is given by 
\begin{equation*}
|\Gamma|:=2\#\text{internal vertices}-\#\text{edges}.
\end{equation*}
Furthermore, the collection $\{\dgraphs(n)\}_{n\geq 1}$ assembles to a non-symmetric operad $\dgraphs$ in the category of graded vector spaces. The operadic composition in $\dgraphs$ is given by insertion. That is, for $\Gamma_1\in \dgraphs(r)$, $\Gamma_2\in \dgraphs(s)$, $1\leq j \leq r$,
\begin{equation*}
\Gamma_1 \circ_j \Gamma_2\in \dgraphs(r+s-1)
\end{equation*}
is constructed by replacing the $j$-th external vertex by $\Gamma_2$, summing over all possible ways of reconnecting the ``loose" edges (which were previously adjacent to vertex $j$) to vertices of $\Gamma_2$, and keeping only admissible directed graphs. The order on the set of edges of the new graphs is simply given by letting the edges of $\Gamma_1$ come before those of $\Gamma_2$ while leaving the respective orderings unchanged. Moreover, the product,
\begin{equation*}
\Gamma_1\circ\Gamma_2:=\sum\limits_{j=1}^r \Gamma_1\circ_j\Gamma_2,
\end{equation*}
defines a pre-Lie product (\cite{lodayvallette2012}, Section 5.9.15). Its graded commutator thus yields a graded Lie algebra structure on $\dgraphs$.
\end{remark}

\begin {center}
\begin{figure}[ht]
\resizebox{!}{2.25cm}{
\begin {tikzpicture}[-latex, auto ,node distance =1cm and 1cm ,on grid ,
semithick];

\tikzstyle{vertex1}=[circle,minimum size=2mm,draw=black,fill=black];
\tikzstyle{vertex2}=[circle,minimum size=2mm,draw=black,fill=white];
\tikzset{edge/.style = {-}};

\node[vertex2] (1) {$1$};
\node[vertex2] (2) [right=of 1,xshift=0.5cm] {$2$};
\path (1) edge [] node[above] {$1$}  (2);

\node (3) [right=of 2,xshift=-0.25cm,yshift=0.5cm] {$\circ_2$};

\node[vertex2] (4) [right=of 2,xshift=0.5cm] {$1$};
\node[vertex2] (5) [right=of 4]{$2$};

\node[vertex1] (6) [above=of 4,xshift=0.5cm,yshift=0.5cm] {};

\path (6) edge [] node[left] {$1$}  (4);
\path (6) edge [] node[right] {$2$}  (5);

\node (7) [right=of 5,xshift=0cm,yshift=0.5cm] {$=$};

\node[vertex2] (8) [right=of 7,xshift=0cm,yshift=-0.5cm] {$1$};
\node[vertex2] (9) [right=of 8,xshift=0.25cm] {$2$};
\node[vertex2] (10) [right=of 9,xshift=0.25cm] {$3$};

\node[vertex1] (19) [above=of 9,yshift=0.5cm] {};

\path (8) edge [] node[left] {$1$}  (19);
\path (19) edge [] node[left] {$2$}  (9);
\path (19) edge [] node[left] {$3$}  (10);


\node (11) [right=of 10,xshift=0cm,yshift=0.5cm] {$+$};

\node[vertex2] (12) [right=of 11,xshift=0cm,yshift=-0.5cm] {$1$};
\node[vertex2] (13) [right=of 12,xshift=0.25cm] {$2$};
\node[vertex2] (14) [right=of 13,xshift=0.25cm] {$3$};

\node[vertex1] (20) [above=of 13,yshift=0.5cm,xshift=0.5cm] {};

\path (12) edge [] node[above] {$1$}  (13);
\path (20) edge [] node[left] {$2$}  (13);
\path (20) edge [] node[right] {$3$}  (14);


\node (15) [right=of 14,xshift=0cm,yshift=0.5cm] {$+$};

\node[vertex2] (16) [right=of 15,xshift=0cm,yshift=-0.5cm] {$1$};
\node[vertex2] (17) [right=of 16,xshift=0.25cm] {$2$};
\node[vertex2] (18) [right=of 17,xshift=0.25cm] {$3$};

\node[vertex1] (21) [above=of 17,yshift=0.5cm,xshift=0.5cm] {};

\path (16) edge [bend right=40] node[below] {$1$}  (18);
\path (21) edge [] node[left] {$2$}  (17);
\path (21) edge [] node[right] {$3$}  (18);

\end{tikzpicture}
}
\caption{The operadic composition of two graphs in $\dgraphs(2)$.\label{fig:composition}}
\end{figure}
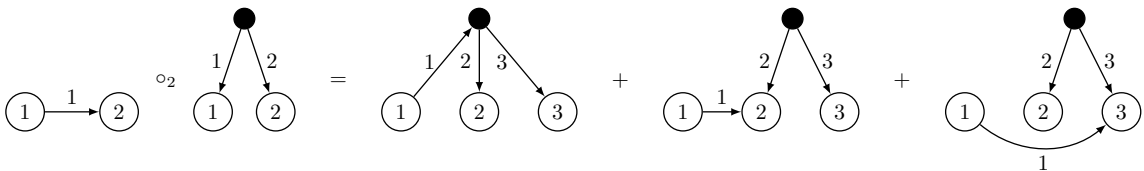
\end{center}

We will need the following two subspaces of $\dgraphs(n)$.

\begin{definition}
Let $\dgraphsuni(n)$ be the subspace of $\dgraphs(n)$ spanned by graphs for which all external vertices are univalent and denote by $\dgraphs(n,q)$ the subspace spanned by graphs with exactly $q$ edges connected to the $n$ external vertices.
\end{definition}

\begin{remark}
J. Alm and S. Merkulov (\cite{Alm2014},\cite{AlmMerkulov}) make use of a similar graph complex. They require, however, internal vertices to be \emph{at least trivalent}. For us, it will be crucial that the internal vertices are allowed to be uni- and bivalent.
\end{remark}

\begin{remark}
Undirected edges in our figures mean that we take the sum over all possible directions, i.e.

\begin {center}
\resizebox{!}{0.4cm}{
\begin {tikzpicture}[-latex, auto ,node distance =1cm and 1cm ,on grid ,
semithick];

\tikzstyle{vertex1}=[circle,minimum size=2mm,draw=black,fill=black];
\tikzstyle{vertex2}=[circle,minimum size=2mm,draw=black,fill=white];
\tikzset{edge/.style = {-}};

\node[vertex1] (1) {};
\node[vertex1] (2) [right=of 1] {};

\draw[edge] (1) to (2);

\node (3) [right=of 2] {$=$};

\node[vertex1] [right=of 3] (4) {};
\node[vertex1] (5) [right=of 4] {};

\node (6) [right=of 5] {$+$};

\node[vertex1] (7)[right=of 6]  {};
\node[vertex1] (8) [right=of 7] {};

\path (4) edge [] (5);
\path (8) edge [] (7);

\node (9) [right=of 8,xshift=-0.5cm] {$.$};

\end{tikzpicture}
}
\end{center}
\end{remark}

\section{M. Kontsevich's representation $\dgraphs\rightarrow \End_A$}
Next, we recall M. Kontsevich's construction (\cite{Kontsevich2003}, Section 6.3) of a linear map $B:\dgraphs\rightarrow \End_A$, $\Gamma\mapsto B_\Gamma$ following D. Calaque and C. A. Rossi's text \cite{RossiCalaque}. For this, let $\Gamma\in \dgr(n)$ and assume it has $m$ internal vertices. Label these $m$ internal vertices by $\bar{1},\dots, \bar{m}$ in an arbitrary way. Define the operator of degree one,
\begin{equation*}
\tau:=\sum\limits_{l=1}^d \frac{\partial}{\partial p^l}\otimes \frac{\partial}{\partial x_l}:A^{\otimes 2}\rightarrow A^{\otimes 2}.
\end{equation*}
Moreover, for any finite index set $I$ and any pair $(i,j)$ of distinct elements in $I$, let $\tau_{ij}:A^{\otimes I}\rightarrow A^{\otimes I}$ be the operator acting as $\tau$ on the $i$-th and $j$-th factors and as the identity on all other factors of $A$. Using this data, we define for any $n\geq 1$,
\begin{align*}
B'_\Gamma:A^{m+n}&\rightarrow A\\
B'_\Gamma(\gamma_1,\dots,\gamma_m,f_1,\dots,f_n)&:=\mu_{m+n}\prod\limits_{(i,j)\in E(\Gamma)}\tau_{ij}(\gamma_{\bar{1}}\otimes \cdots \otimes \gamma_{\bar{m}}\otimes f_1\otimes \cdots \otimes f_n)
\end{align*}
where $\mu_{m+n}:A^{\otimes (m+n)}\rightarrow A$ is the iterated graded commutative product,  $E(\Gamma)$ denotes the edge set of $\Gamma$, $(i,j)$ describes the edge starting at vertex $i$ and ending at $j$ for $i,j\in I:=\{1,\dots,n\}\cup\{\bar{1},\dots,\bar{m}\}$ and the order of the product of the endomorphisms $\tau_{ij}$ is determined by the order on the set of edges (i.e. the automorphism corresponding to the first edge in the linear order is applied first). 

Notice that since the automorphisms $\tau_{ij}$ are of degree one, any permutation $\sigma$ in the order of their product produces a sign $(-1)^{|\sigma |}$, where $|\sigma|$ denotes the parity of the permutation. This is compatible with the equivalence relation on $\dgr$ given by the ordering on the set of edges.

The map $B'_\Gamma$ depends on the choice of labeling of the internal vertices. However, the map,
\begin{align*}
B_\Gamma:A^{\otimes n}&\rightarrow A\\
B_\Gamma(f_1\otimes \dots \otimes f_n)&:=B'_\Gamma(\pi,\dots,\pi,f_1,\dots,f_n)
\end{align*}
is independent of the labeling we choose, and therefore yields a well-defined element of $\End_A(n)$.

\begin{proposition}\label{Prop:dgraphsmap}
The map $B:\dgr(n)\rightarrow \End_A(n)$ factors through the projection $\dgr(n)\twoheadrightarrow \dgraphs(n)$.
\end{proposition}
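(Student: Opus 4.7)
The statement amounts to checking that $B_\Gamma = 0$ whenever $\Gamma$ is a linear combination appearing as one of the two generalized IHX relations. Both types spring from a single algebraic fact: the bivector $\pi = c^{k}_{ij}\, x_{k}\, p^{i}\, p^{j}$ satisfies $[\pi,\pi]_{S}=0$, where $[-,-]_{S}$ denotes the Schouten--Nijenhuis bracket. This is equivalent to the Jacobi identity for $\frakg$, with the antisymmetry $c^{k}_{ij} = -c^{k}_{ji}$ automatically enforced by the odd variables $p^{i}$ in $\pi$.

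The strategy is to localize at a single internal vertex. In the formula for $B_\Gamma$, a copy of $\pi$ is placed at each internal vertex $v$, and every edge incident to $v$ differentiates this $\pi$ once: outgoing edges act by $\partial/\partial p^{l}$ and incoming edges by $\partial/\partial x_{l}$, the index $l$ being shared with the other endpoint through the operator $\tau_{ij}$. I would fix an internal vertex $v$ and rewrite $B_\Gamma$ so that all contributions of $v$ together with its adjacent edges are collected into a single local factor, treating the rest of the graph and the external inputs as a multilinear ``boundary condition''.

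For the first IHX relation, splitting $v$ into two internal vertices $v_{1}, v_{2}$ connected by a new oriented edge $e$ amounts to placing $\pi\cdot\pi$ at the two new vertices, contracting one pair $(\partial/\partial p^{l}, \partial/\partial x_{l})$ along $e$, and redistributing the original derivations previously at $v$ between $v_{1}$ and $v_{2}$ in all admissible ways. Summing over the two orientations of $e$ (our convention for undirected edges) and over all admissible reconnections reproduces exactly the Leibniz expansion of $[\pi,\pi]_{S}$ evaluated at the former position of $v$; this vanishes. The second IHX relation is treated analogously: a $4$-valent internal vertex with one incoming and three outgoing edges is the graphical avatar of the trivector $[\pi,\pi]_{S}$, and the listed splittings evaluate to its Schouten-bracket expansion, which is again zero by Jacobi.

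The main obstacle is sign bookkeeping. Signs enter from three sources: (a) the degree-one operators $\tau_{ij}$, whose product is graded commutative (so reordering incurs a sign by the parity of the permutation, matching the equivalence relation on $\dgr$); (b) the linear order on edges, which must be extended to accommodate the newly added edge $e$ placed last; and (c) the odd variables $p^{i}$ inside $\pi$. Pairing the two orientations of $e$ is exactly what converts these ordered $\tau$-products into the symmetric/antisymmetric combinations defining $[\pi,\pi]_{S}$, and verifying the match of signs is where the bulk of the calculation lies; once it is done, vanishing of the Jacobiator finishes the argument.
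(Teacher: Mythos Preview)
Your plan is correct and matches the paper's proof: both localize at the split vertex, observe that the sum over admissible reconnections is the Leibniz expansion of the outer derivatives applied to the two new copies of $\pi$ contracted along the inserted edge, and then use that this inner factor vanishes by the Jacobi identity. The paper simply writes this inner factor explicitly as $\frac{\partial\pi}{\partial x_m}\frac{\partial\pi}{\partial p^m}$ and expands it to the Jacobiator of the structure constants, rather than invoking the Schouten bracket by name, but the content is the same.

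One small correction: in the paper's definition of the generalized IHX relations the inserted edge $e$ is \emph{directed}, not undirected, so there is no sum over its two orientations. Your parenthetical ``our convention for undirected edges'' is a misreading. This does not damage the argument, since a single orientation already produces $\frac{\partial\pi}{\partial x_m}\frac{\partial\pi}{\partial p^m}$, which is (up to a factor) the Jacobiator and hence zero on its own; the other orientation would give the same quantity by graded commutativity. You should also note, as the paper tacitly does, that the ``missing'' terms in the Leibniz expansion---those corresponding to non-admissible reconnections---vanish automatically because $\pi$ is linear in the $x$-variables and quadratic in the $p$-variables, so the IHX sum really does coincide with the full Leibniz expansion.
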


\begin{proof}
The generalized IHX relations are all obtained by replacing one internal vertex $v$ (which in this case is of valence less or equal to four) of some graph $\Gamma\in \dgr(n)$ by a directed edge connected by two internal vertices and summing over all possible ways of reconnecting the edges previously adjacent to $v$. If $v$ is for instance of valence four with one incoming and three outgoing edges, this will produce a term of the following form within the large product defined by $B_\Gamma(f_1\otimes \dots\otimes f_n)$ (for $f_1,\dots, f_n\in A$),
\begin{equation*}
\frac{\partial^4}{\partial x_i \partial p^j \partial p^k \partial p^l}\left(\frac{\partial \pi}{\partial x_m}\frac{\partial\pi}{\partial p^m}\right),
\end{equation*}
where the term $\frac{\partial \pi}{\partial x_m}\frac{\partial\pi}{\partial p^m}$ corresponds to the newly inserted directed edge. If the valence of $v$ is smaller the derivatives in front of this factor change accordingly. However, a short calculation shows that,
\begin{equation*}
\frac{\partial\pi}{\partial x_m}\frac{\partial \pi}{\partial p^m}=2\pi_{mi}\frac{\partial\pi_{jk}}{\partial x_m} p^i p^j p^k=\frac{2}{3}\left(\pi_{mi}\frac{\partial\pi_{jk}}{\partial x_m}+\pi_{mj}\frac{\partial\pi_{ki}}{\partial x_m}+\pi_{mk}\frac{\partial\pi_{ij}}{\partial x^m}\right) p^i p^j p^k=0.
\end{equation*}
This is zero since the term in the bracket is equivalent to the Jacobi identity in terms of the structure constants. Hence, the generalized IHX relations are sent to zero under the map $B:\dgr(n)\rightarrow \End_A(n)$ from which the statement follows.
\end{proof}

\begin{remark}
Proposition \ref{Prop:dgraphsmap} ensures that there is a well-defined map $\dgraphs\rightarrow \End_A$. We denote this map by $B$ as well. It follows from the product rule that this is a map of operads. More precisely, for $\Gamma_1\in \dgraphs(n)$, $\Gamma_2\in \dgraphs(m)$ and $f_1,\dots,f_{n+m-1}\in A$ we have,
\begin{align*}
B_{\Gamma_1\circ_i \Gamma_2}(f_1\otimes \cdots\otimes f_{n+m-1})=&B_{\Gamma_1}(f_1\otimes \dots\otimes f_{i-1} \otimes B_{\Gamma_2}(f_i\otimes \dots f_{i+m-1})\otimes f_{i+m}\otimes \dots \otimes f_{n+m-1})\\
=& B_{\Gamma_1}\circ_i B_{\Gamma_2} (f_1\otimes \dots \otimes f_{n+m-1}).
\end{align*}
\end{remark}

\begin{example}\label{example:example}
Consider $\Gamma_1\in \dgraphs(2)$ and $\Gamma_2\in \dgraphs(3)$ as in Figure \ref{fig:example1}. Then,
\begin{equation*}
B_{\Gamma_1}(f_1\otimes f_2)=\frac{\partial^2\pi}{\partial p^{i_1} \partial p^{i_2}}\frac{\partial^3\pi}{\partial x_{i_2}\partial p^{i_3}\partial p^{i_4}} \frac{\partial^2 f_1}{\partial x_{i_1}\partial x_{i_3}}\frac{\partial f_2}{\partial x_{i_4}}=c_{i_1,i_2}^j x_j c_{i_3,i_4}^{i_2}\frac{\partial^2 f_1}{\partial x_{i_1}\partial x_{i_3}}\frac{\partial f_2}{\partial x_{i_4}},
\end{equation*}
and,
\begin{equation*}
B_{\Gamma_2}(f_1\otimes f_2\otimes f_3)=\frac{\partial^3 \pi}{\partial x_{i_1} \partial p^{i_2} \partial p^{i_3}} \frac{\partial f_1}{\partial p^{i_1}}\frac{\partial f_2}{\partial x_{i_2}} \frac{\partial f_3}{\partial x_{i_3}}=c_{i_2,i_3}^{i_1} \frac{\partial f_1}{\partial p^{i_1}}\frac{\partial f_2}{\partial x_{i_2}} \frac{\partial f_3}{\partial x_{i_3}}.
\end{equation*}
\begin {center}
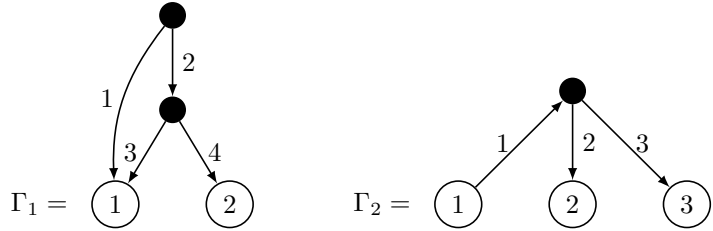
\begin{figure}[ht]
\resizebox{!}{3cm}{
\begin {tikzpicture}[-latex, auto ,node distance =1cm and 1cm ,on grid ,
semithick];

\tikzstyle{vertex1}=[circle,minimum size=2mm,draw=black,fill=black];
\tikzstyle{vertex2}=[circle,minimum size=2mm,draw=black,fill=white];
\tikzset{edge/.style = {-}};

\node[vertex2] (1) {$1$};
\node[vertex2] (2) [right=of 1,xshift=0.5cm] {$2$};

\node[vertex1] (3) [above=of 1,xshift=0.75cm,yshift=0.25cm] {};
\node[vertex1] (4) [above=of 3,yshift=0.25cm] {};

\path (3) edge [] node[left] {$3$} (1);
\path (3) edge [] node[right] {$4$} (2);
\path (4) edge [] node[right] {$2$}(3);
\path (4) edge [bend right=20] node[left] {$1$} (1);

\node[vertex2] (5) [right=of 2,xshift=2cm] {$1$};
\node[vertex2] (6) [right=of 5,xshift=0.5cm] {$2$};
\node[vertex2] (7) [right=of 6,xshift=0.5cm] {$3$};

\node[vertex1] (8) [above=of 6,yshift=0.5cm] {};

\path (5) edge [] node[left] {$1$} (8);
\path (8) edge [] node[right] {$2$} (6);
\path (8) edge [] node[right] {$3$} (7);

\node (9) [left=of 1,xshift=0cm] {$\Gamma_1=$};
\node (10) [left=of 5,xshift=0cm] {$\Gamma_2=$};

\end{tikzpicture}
}
\caption{The graphs corresponding to the calculations given in Example \ref{example:example}. \label{fig:example1}}
\end{figure}
\end{center}

\end{example}

\begin{remark}
For $n=2$, when the graph $\Gamma\in \dgraphs(2)$ has no edges starting at any external vertex and $f_1,f_2$ lie in $\mathbb{K}[x_1,\dots,x_d]\cong S\frakg$, the map $B_\Gamma$ corresponds precisely to M. Kontsevich's bidifferential operator $B_{\Gamma,\pi}$ (\cite{Kontsevich2003}, Section 2). Moreover, when $f_1,f_2\in A$, they decompose as $f_i=a_i(x_1,\dots,x_d)b_i(p^1,\dots,p^d)$ ($i=1,2$), and if $\Gamma\in \dgraphs(2)$ has no edges starting at any external vertex, $B_\Gamma$ satisfies,
\begin{equation*}
B_\Gamma(f_1,f_2)=B_\Gamma(a_1,a_2)b_1 b_2=B_{\Gamma,\pi}(a_1,a_2)b_1b_2.
\end{equation*}
One can verify (see \cite{Kontsevich2003}) that up to order $\epsilon^2$ (before setting $\epsilon=1$), M. Kontsevich's product $\tilde m_\pi=\tilde m_\Duf$ is given by,

\begin {center}
\resizebox{!}{2.5cm}{
\begin {tikzpicture}[-latex, auto ,node distance =1cm and 1cm ,on grid ,
semithick];

\tikzstyle{vertex1}=[circle,minimum size=2mm,draw=black,fill=black];
\tikzstyle{vertex2}=[circle,minimum size=2mm,draw=black,fill=white];
\tikzset{edge/.style = {-}};

\node(1)  {$\tilde m_\pi=$};

\node[vertex2] (3) [right=of 1,yshift=-1cm] {};
\node[vertex2] (4) [right=of 3] {};

\node (24) [right=of 4,xshift=-0.25cm,yshift=1cm] {$+\dfrac{1}{2}$};
\node[vertex2] (5)  [right=of 4,xshift=0.5cm] {};
\node[vertex2] (6) [right=of 5]  {};
\node[vertex1] (7)  [above=of 6, xshift=-0.5cm] {};

\path (7) edge [] node[left] {$1$} (5);
\path (7) edge [] node[right] {$2$} (6);

\node (25) [right=of 24,xshift=1.5cm] {$-\dfrac{1}{12}$};

\node[vertex2] (8)  [right=of 6,xshift=0.5cm] {};
\node[vertex2] (9) [right=of 8]  {};
\node[vertex1] (10)  [above=of 9, xshift=-0.5cm] {};
\node[vertex1] (11)  [above=of 10, xshift=0.5cm] {};

\path (10) edge [] node[left] {$3$} (8);
\path (10) edge [] node[left] {$4$} (9);
\path (11) edge [] node[left] {$2$} (10);
\path (11) edge [] node[right] {$1$} (9);

\node (26) [right=of 25,xshift=1.5cm] {$+\dfrac{1}{12}$};

\node[vertex2] (12)  [right=of 9,xshift=0.5cm] {};
\node[vertex2] (13) [right=of 12]  {};
\node[vertex1] (14)  [above=of 13, xshift=-0.5cm] {};
\node[vertex1] (15)  [above=of 14, xshift=-0.5cm] {};

\path (14) edge [] node[right] {$3$} (12);
\path (14) edge [] node[right] {$4$} (13);
\path (15) edge [] node[right] {$2$} (14);
\path (15) edge [] node[left] {$1$} (12);

\node (27) [right=of 26,xshift=1.5cm] {$+\dfrac{1}{8}$};

\node[vertex2] (16)  [right=of 13,xshift=0.5cm] {};
\node[vertex2] (17) [right=of 16]  {};
\node[vertex1] (18)  [above=of 16,yshift=1cm] {};
\node[vertex1] (19)  [above=of 17,yshift=1cm] {};

\path (18) edge [] node[left] {$1$} (16);
\path (18) edge [] node[left] {$2$} (17);
\path (19) edge [] node[right] {$3$} (16);
\path (19) edge [] node[right] {$4$} (17);

\node (28) [right=of 27,xshift=1.5cm] {$+\dfrac{1}{24}$};

\node[vertex2] (20)  [right=of 17,xshift=0.5cm] {};
\node[vertex2] (21) [right=of 20]  {};
\node[vertex1] (22)  [above=of 20,yshift=1cm] {};
\node[vertex1] (23)  [above=of 21,yshift=1cm] {};

\path (22) edge [] node[right] {$1$} (20);
\path (23) edge [] node[left] {$4$} (21);
\path (22) edge [bend left=50] node[above] {$2$} (23);
\path (23) edge [bend left=50] node[below] {$3$} (22);

\node (29) [right=of 28,xshift=1.5cm] {$+\cdots$};
\end{tikzpicture}
}
\end{center}

\end{remark}

\begin{example}
The Chevalley-Eilenberg differential $d_A$ on $A$ is represented by the graph,
\begin {center}
\resizebox{!}{1cm}{
\begin {tikzpicture}[-latex, auto ,node distance =1cm and 1cm ,on grid ,
semithick];

\tikzstyle{vertex1}=[circle,minimum size=2mm,draw=black,fill=black];
\tikzstyle{vertex2}=[circle,minimum size=2mm,draw=black,fill=white];
\tikzset{edge/.style = {-}};

\node (0) [left=of 1,yshift=0.5cm] {$a_1=$};

\node[vertex2] (1) {};
\node[vertex1] (2) [above=of 1] {};
\draw[edge] (1) to (2);

\node (3) [right=of 1,yshift=0.5cm] {$=$};

\node[vertex2] (4) [right=of 3,yshift=-0.5cm] {};
\node[vertex1] (5) [above=of 4] {};
\path (5) edge []  (4);

\node (6) [right=of 4,yshift=0.5cm] {$+$};

\node[vertex2] (7) [right=of 6,yshift=-0.5cm] {};
\node[vertex1] (8) [above=of 7] {};
\path (7) edge []  (8);

\node (9) [right=of 7,xshift=-0.4cm,yshift=0.4cm] {.};

\end{tikzpicture}
}
\end{center}
For $f\in A$, we find,
\begin{equation*}
d_A f= \frac{\partial \pi}{\partial p^l}\frac{\partial f}{\partial x_l}+\frac{\partial f}{\partial p^l}\frac{\partial \pi}{\partial x_l}=-2c_{il}^k x_k p^i \frac{\partial f}{\partial x_l}+\frac{\partial f}{\partial p^l} c_{ij}^l p^i p^j.
\end{equation*}
On basis elements it therefore acts as,
\begin{equation*}
d_A x_m=-2 c_{im}^k x_k p^i, \hspace{1cm} d_A p^m=c_{ij}^m p^i p^j.
\end{equation*}
Note that the usual convention is to define the Chevalley-Eilenberg differential as $-\frac{1}{2}d_A$.
\end{example}

\begin{definition}\label{def:universality}
A morphism $f:A^{\otimes n}\rightarrow A\in \End_A(n)$ is called \emph{universal} if there exists a graph $\Gamma\in \dgraphs(n)$ such that $f=B(\Gamma)\in \End_A(n)$. Any such morphism does not depend on the specific choice of the Lie algebra $\frakg$.
\end{definition}

\section{Proof of Theorem \ref{thm:maintheoreom}}

In Proposition \ref{prop:prelieisos} we have seen that as pre-Lie algebras,
\begin{equation*}
\frakg_{\As,A}\cong \prod\limits_{n\geq 1}\End_{sA}(n).
\end{equation*}
By Definition \ref{def:universality}, universal morphisms $f:(sA)^{\otimes n}\rightarrow sA$ are in bijection with homomorphisms $F:\KAs(n)\rightarrow \End_A(n)$ which factor through the map $B:\dgraphs(n)\rightarrow \End_A(n)$, i.e. homomorphisms for which there exists a morphism $\tilde F:\KAs(n)\rightarrow \dgraphs(n)$ making the diagram,
\begin {center}
\begin {tikzpicture}[-latex, auto ,node distance =1cm and 1cm ,on grid ,
semithick];
\tikzstyle{vertex1}=[circle,minimum size=2mm,draw=black,fill=black];
\tikzstyle{vertex2}=[circle,minimum size=2mm,draw=black,fill=white];
\tikzset{edge/.style = {-}};
\node (1) {$\KAs(n)$};
\node (2) [right=of 1,xshift=2cm] {$\End_A(n)$};
\node (3) [below=of 1,xshift=1.5cm,yshift=-0.25cm]{$\dgraphs(n)$};
\path (1) edge [] node[above] {$F$}  (2);
\path (3) edge [] node[right,xshift=0.2cm] {$B$} (2);
\path (1) edge [] node[left,xshift=-0.2cm] {$\tilde F$} (3);
\end{tikzpicture}
\end{center}
commute. Set,
\begin{equation*}
\frakg_{\As,\dgraphs}:=\Hom(\KAs,\dgraphs)=\prod\limits_{n\geq 1}\Hom(\KAs(n),\dgraphs(n)).
\end{equation*}
It forms a pre-Lie algebra, the product being given by the convolution product. A similiar identification as in Proposition \ref{prop:prelieisos} yields,
\begin{equation}\label{eq:graphs}
\frakg_{\As,\dgraphs}\cong \prod\limits_{n\geq 1} \dgraphs(n)\otimes \mathbb{K}{\mu_n^c}^*\cong \prod\limits_{n\geq 1} s^{-n+1}\dgraphs(n).
\end{equation}
For $\Gamma_1\in \dgraphs(n)$ and $\Gamma_2\in \dgraphs(m)$, we set,
\begin{equation*}
\Gamma_1\star\Gamma_2:=\sum\limits_{i=1}^n (-1)^{|\Gamma_2|(1-n)+(i+1)(m-1)}\Gamma_1\circ_i \Gamma_2.
\end{equation*}
This defines a pre-Lie bracket on $\prod_{n\geq 1}s^{-n+1}\dgraphs(n)$ which turns the bijections above into pre-Lie algebra isomorphisms.

\begin{lemma}\label{lemma:B}
Let $\tilde F, \tilde G\in \frakg_{\As,\dgraphs}$. Then,
$(B\circ \tilde F)\star (B\circ \tilde G)=B\circ (\tilde F\star \tilde G)$.
\end{lemma}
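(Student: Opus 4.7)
My plan is to show that $(B \circ \tilde F) \star (B \circ \tilde G) = B \circ (\tilde F \star \tilde G)$ by establishing that post-composition with $B$ defines a morphism of pre-Lie algebras from $\frakg_{\As,\dgraphs}$ to $\frakg_{\As,A}$. Two ingredients are needed: that $B:\dgraphs(n)\to \End_A(n)$ preserves the grading for each $n$, and that $B$ is a morphism of non-symmetric operads, i.e.\ $B_{\Gamma_1\circ_i \Gamma_2}=B_{\Gamma_1}\circ_i B_{\Gamma_2}$.

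Degree-preservation is a quick bookkeeping check: if $\Gamma$ has $m$ internal vertices and $e$ edges, then $B_\Gamma$ is obtained by inserting $\pi^{\otimes m}$ (each $\pi$ contributing degree $2$) and applying the product of $e$ operators $\tau_{ij}$ (each of degree $-1$ with the chosen conventions), so $B_\Gamma$ has degree $2m-e=|\Gamma|$ as an element of $\End_A(n)$. The operad-morphism property is precisely the compatibility already asserted in the remark following Proposition \ref{Prop:dgraphsmap}; it reflects that inserting $\Gamma_2$ into the $i$-th external vertex of $\Gamma_1$ and summing over all admissible reconnections implements, via the Leibniz rule, the combinatorial effect of composing the corresponding multidifferential operators.

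With these two ingredients the proof reduces to a direct sign-tracking calculation. Under the identifications of Proposition \ref{prop:prelieisos} and equation \eqref{eq:graphs}, both pre-Lie products are given by the \emph{same formula}
\begin{equation*}
x\star y \;=\; \sum_{i=1}^{n}(-1)^{|y|(1-n)+(i+1)(m-1)}\, x\circ_i y,
\end{equation*}
in which the sign depends only on arities and on the degree of the second argument. Since $B$ intertwines the partial compositions $\circ_i$, preserves arity, and preserves degree (so that $|B(\Gamma)|=|\Gamma|$), it commutes with $\star$. Translating back from the tensor-factor representatives $\Gamma\otimes {\mu_n^c}^*\in \dgraphs(n)\otimes \mathbb{K}{\mu_n^c}^*$ to the $\Hom$-notation $\tilde F\in \Hom(\KAs(n),\dgraphs(n))$ then yields the asserted identity.

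There is essentially no real obstacle here: the statement is a formal consequence of the tautology that a degree-preserving operad morphism induces a morphism of convolution pre-Lie algebras. The only step that genuinely requires attention is checking that the signs appearing in $\star$ on the $\dgraphs$-side and on the $\End_A$-side match, which in turn reduces to the degree identity noted above.
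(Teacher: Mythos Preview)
Your proposal is correct and follows essentially the same approach as the paper: both argue that since $B$ is a morphism of (non-symmetric) operads, it induces a morphism of the associated convolution pre-Lie algebras. The paper's proof is a single sentence to this effect; your version is more explicit in that you pass through the tensor-factor identification and verify degree-preservation so that the sign $(-1)^{|y|(1-n)+(i+1)(m-1)}$ matches on both sides, whereas the paper works directly at the level of the convolution product $F\star G=\gamma_{(1)}\circ(F\circ_{(1)}G)\circ\Delta_{(1)}$, where no explicit degree check is needed.
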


\begin{proof}
Since the respective pre-Lie products on $\frakg_{\As,A}$ and $\frakg_{\As,\dgraphs}$ are defined using the operadic composition, and $B$ respects all such operations, the statement follows.
\end{proof}

\begin{lemma}\label{lemma:universal}
There is a natural bijection,
\begin{equation*}
\MC(\frakg_{\As,\dgraphs})/\exp(\frakg^0_{\As,\dgraphs})\cong \{\text{universal } A_\infty\text{-structures on } A\}/\{\text{universal }A_\infty\text{-isomorphisms}\}.
\end{equation*}
\end{lemma}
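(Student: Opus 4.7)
The plan is to mirror Proposition \ref{prop:mcgauge} at the level of graphs, using $B$ to transport the entire gauge-theoretic formalism. The first step is to equip $\frakg_{\As,\dgraphs}$ with the same infrastructure as $\frakg_{\As,A}$: the pre-Lie isomorphism \eqref{eq:graphs} exhibits $\frakg_{\As,\dgraphs}$ as a complete filtered graded Lie algebra with descending filtration $\mf^p\frakg_{\As,\dgraphs}:=\prod_{n\geq p+1}\Hom(\KAs(n),\dgraphs(n))$, and all the constructions of Section \ref{subsection:gaugeequivalence} apply verbatim. In particular one obtains the set $\MC(\frakg_{\As,\dgraphs})$, the pro-nilpotent Lie subalgebra $\frakg^0_{\As,\dgraphs}$, and its exponentiation $\exp(\frakg^0_{\As,\dgraphs})$ acting by the adjoint action on Maurer-Cartan elements.

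Next, by Lemma \ref{lemma:B} the map $B\colon\dgraphs\to\End_A$ induces a morphism of pre-Lie, hence of graded Lie, algebras $B_*\colon\frakg_{\As,\dgraphs}\to\frakg_{\As,A}$ that is manifestly compatible with the filtrations. Consequently $B_*$ sends $\MC(\frakg_{\As,\dgraphs})$ into $\MC(\frakg_{\As,A})$ and $\frakg^0_{\As,\dgraphs}$ into $\frakg^0_{\As,A}$; the induced group homomorphism $\exp(\frakg^0_{\As,\dgraphs})\to\exp(\frakg^0_{\As,A})$ intertwines the two gauge actions. Composing with the identifications of Proposition \ref{prop:mcgauge} yields a well-defined map
\begin{equation*}
\MC(\frakg_{\As,\dgraphs})/\exp(\frakg^0_{\As,\dgraphs})\longrightarrow \{A_\infty\text{-structures on } A\}/\{A_\infty\text{-isomorphisms}\}.
\end{equation*}

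Finally, I would identify the image of this map with universal structures modulo universal isomorphisms. Unpacking Definition \ref{def:universality}, an $A_\infty$-structure (resp.\ an $A_\infty$-isomorphism) on $A$ is \emph{universal} precisely when it arises via $B_*$ from an element of $\MC(\frakg_{\As,\dgraphs})$ (resp.\ of $\exp(\frakg^0_{\As,\dgraphs})$), so surjectivity onto the universal part is immediate by construction. For injectivity one argues that if $\tilde\alpha,\tilde\beta\in\MC(\frakg_{\As,\dgraphs})$ are sent by $B_*$ to structures related by a \emph{universal} $A_\infty$-isomorphism, then by universality the implementing degree-zero element itself lifts to $\frakg^0_{\As,\dgraphs}$, and an inductive argument along the filtration shows the lift implements a gauge equivalence already upstairs in $\frakg_{\As,\dgraphs}$. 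The main point to be careful about is exactly this lifting: one must insist that ``universal'' for the $A_\infty$-relations and for the gauge-equivalence identities means that the equalities hold already in $\dgraphs$, and not merely after applying $B$, for otherwise non-injectivity of $B$ could in principle produce distinct classes upstairs that collapse in $\frakg_{\As,A}$.
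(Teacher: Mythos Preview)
Your proposal is correct and follows essentially the same route as the paper: set up the gauge formalism on $\frakg_{\As,\dgraphs}$, use Lemma~\ref{lemma:B} to transport everything along $B$, and then unpack Definition~\ref{def:universality}. The paper's proof is terser---it simply asserts that $B\circ(\tilde\alpha\star\tilde\alpha)=0$ is \emph{equivalent} to $\tilde\alpha\star\tilde\alpha=0$ and similarly for the gauge data---whereas you correctly flag that this step hinges on interpreting ``universal'' so that the identities hold already in $\dgraphs$, not merely after applying $B$; this is a point the paper leaves implicit.
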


\begin{proof}
A universal $A_\infty$-structure $m$ on $A$ corresponds to a Maurer-Cartan element $\alpha\in MC(\frakg_{\As,A})$ for which there exists an $\tilde \alpha\in \frakg_{\As,\dgraphs}$ such that $\alpha=B\circ \tilde \alpha$. By Lemma \ref{lemma:B} we have,
\begin{equation*}
0=\alpha\star \alpha=(B\circ \tilde \alpha)\star (B\circ \tilde \alpha)=B\circ (\tilde \alpha\star \tilde \alpha),
\end{equation*}
which is equivalent to $\tilde \alpha\star \tilde \alpha =0$ and $\tilde \alpha\in \MC(\frakg_{\As,\dgraphs})$. Moreover, universal $A_\infty$-isomorphisms correspond bijectively to elements of $\exp(\frakg^0_{\As,A})$ which factor through $B$. These may in turn be identified with elements of $\exp(\frakg^0_{\As,\dgraphs})$. Note that the discussion for $\frakg_{\As,A}$ from Section \ref{subsection:gaugeequivalence} may also be applied to $\frakg_{\As,\dgraphs}$ to define $\exp(\frakg^0_{\As,\dgraphs})$ and its action on $\MC(\frakg_{\As,\dgraphs})$.
\end{proof}

\begin{proposition}
Consider the Maurer-Cartan element $\alpha_\Duf\in \MC(\frakg_{\As,\dgraphs})$ corresponding to the universal $A_\infty$-structure $(d_A,\tilde m_\Duf,0,\dots)$. It is given by,

\begin {center}
\resizebox{!}{2.25cm}{
\begin {tikzpicture}[-latex, auto ,node distance =1cm and 1cm ,on grid ,
semithick];

\tikzstyle{vertex1}=[circle,minimum size=2mm,draw=black,fill=black];
\tikzstyle{vertex2}=[circle,minimum size=2mm,draw=black,fill=white];
\tikzset{edge/.style = {-}};

\node at (-1,0.5) {$\alpha_{\Duf}=$};

\node[vertex2] (1) {};
\node[vertex1] (2) [above=of 1] {};
\draw[edge] (1) to (2);

\node at (0.5,0.5) {$+$};

\node[vertex2] (3) [right=of 1] {};
\node[vertex2] (4) [right=of 3] {};

\node at (2.75,0.5) {$+\cfrac{1}{2}$};
\node[vertex2] (5)  [right=of 4,xshift=0.5cm] {};
\node[vertex2] (6) [right=of 5]  {};
\node[vertex1] (7)  [above=of 6, xshift=-0.5cm] {};

\path (7) edge [] node[left] {$1$} (5);
\path (7) edge [] node[right] {$2$} (6);

\node at (5.25,0.5) {$-\cfrac{1}{12}$};

\node[vertex2] (8)  [right=of 6,xshift=0.5cm] {};
\node[vertex2] (9) [right=of 8]  {};
\node[vertex1] (10)  [above=of 9, xshift=-0.5cm] {};
\node[vertex1] (11)  [above=of 10, xshift=0.5cm] {};

\path (10) edge [] node[left] {$3$} (8);
\path (10) edge [] node[left] {$4$} (9);
\path (11) edge [] node[left] {$2$} (10);
\path (11) edge [] node[right] {$1$} (9);

\node at (7.75,0.5) {$+\dfrac{1}{12}$};

\node[vertex2] (12)  [right=of 9,xshift=0.5cm] {};
\node[vertex2] (13) [right=of 12]  {};
\node[vertex1] (14)  [above=of 13, xshift=-0.5cm] {};
\node[vertex1] (15)  [above=of 14, xshift=-0.5cm] {};

\path (14) edge [] node[right] {$3$} (12);
\path (14) edge [] node[right] {$4$} (13);
\path (15) edge [] node[right] {$2$} (14);
\path (15) edge [] node[left] {$1$} (12);

\node at (10.25,0.5) {$+\dfrac{1}{8}$};

\node[vertex2] (16)  [right=of 13,xshift=0.5cm] {};
\node[vertex2] (17) [right=of 16]  {};
\node[vertex1] (18)  [above=of 16,yshift=1cm] {};
\node[vertex1] (19)  [above=of 17,yshift=1cm] {};

\path (18) edge [] node[left] {$1$} (16);
\path (18) edge [] node[left] {$2$} (17);
\path (19) edge [] node[right] {$3$} (16);
\path (19) edge [] node[right] {$4$} (17);

\node at (12.75,0.5) {$+\dfrac{1}{24}$};

\node[vertex2] (20)  [right=of 17,xshift=0.5cm] {};
\node[vertex2] (21) [right=of 20]  {};
\node[vertex1] (22)  [above=of 20,yshift=1cm] {};
\node[vertex1] (23)  [above=of 21,yshift=1cm] {};

\path (22) edge [] node[right] {$1$} (20);
\path (23) edge [] node[left] {$4$} (21);
\path (22) edge [bend left=50] node[above] {$2$} (23);
\path (23) edge [bend left=50] node[below] {$3$} (22);

\node at (15.25,0.5) {$+\cdots.$};
\end{tikzpicture}
}
\end{center}

\noindent It is gauge equivalent to the following Maurer-Cartan element,

\begin {center}
\resizebox{!}{1cm}{
\begin {tikzpicture}[-latex, auto ,node distance =1cm and 1cm ,on grid ,
semithick];

\tikzstyle{vertex1}=[circle,minimum size=2mm,draw=black,fill=black];
\tikzstyle{vertex2}=[circle,minimum size=2mm,draw=black,fill=white];
\tikzset{edge/.style = {-}};

\node (1) {$\alpha'_{\Duf}=$};

\node[vertex2] (2) [right=of 1,yshift=-0.5cm] {};
\node[vertex1] (3) [above=of 2] {};
\draw[edge] (2) to (3);

\node (4) [right=of 2,xshift=-0.5cm,yshift=0.5cm]  {$+$};

\node[vertex2] (5) [right=of 2] {};
\node[vertex2] (6) [right=of 5] {};

\node (7) [right=of 6,yshift=0.5cm]  {$+\cfrac{1}{24}$};

\node[vertex2] (8) [right=of 7,yshift=-0.5cm] {};
\node[vertex2] (9) [right=of 8] {};
\node[vertex2] (10) [right=of 9] {};
\node[vertex1] (11) [above=of 9] {};

\draw[edge] (11) to node[left] {$1$} (8);
\draw[edge] (11) to node[left] {$2$} (9);
\draw[edge] (11) to node[left] {$3$} (10);

\node (12) [right=of 10,yshift=0.5cm]  {$+\cdots.$};

\end{tikzpicture}
}
\end{center}

\noindent They are related via the gauge action of the element,

\begin {center}
\resizebox{!}{1cm}{
\begin {tikzpicture}[-latex, auto ,node distance =1cm and 1cm ,on grid ,
semithick];

\tikzstyle{vertex1}=[circle,minimum size=2mm,draw=black,fill=black];
\tikzstyle{vertex2}=[circle,minimum size=2mm,draw=black,fill=white];
\tikzset{edge/.style = {-}};

\node (1)  {$\xi=-\cfrac{1}{4}\big($};

\node[vertex2] (2) [right=of 1] {};
\node[vertex2] (3) [right=of 2] {};
\path (2) edge []  (3);

\node (4) [right=of 3, xshift=-0.5cm] {$-$};

\node[vertex2] (5) [right=of 4,xshift=-0.5cm] {};
\node[vertex2] (6) [right=of 5] {};
\path (6) edge []  (5);

\node (7) [right=of 6] {$\big)-\cfrac{1}{16}\big($};

\node[vertex2] (8) [right=of 7,yshift=-0.5cm] {};
\node[vertex2] (9) [right=of 8] {};
\node[vertex1] (10) [above=of 8, xshift=0.5cm] {};

\path (8) edge [] node[above] {$3$} (9);
\path (10) edge [] node[left] {$1$} (8);
\path (10) edge [] node[right] {$2$} (9);

\node (11) [right=of 9,yshift=0.5cm,xshift=-0.5cm] {$-$};

\node[vertex2] (12) [right=of 9] {};
\node[vertex2] (13) [right=of 12] {};
\node[vertex1] (14) [above=of 12, xshift=0.5cm] {};

\path (13) edge [] node[above] {$3$} (12);
\path (14) edge [] node[left] {$1$} (12);
\path (14) edge [] node[right] {$2$} (13);

\node (15) [right=of 13,yshift=0.5cm] {$\big)+\cfrac{1}{48}\big($};

\node[vertex2] (16) [right=of 15,yshift=-0.5cm] {};
\node[vertex2] (17) [right=of 16] {};
\node[vertex1] (18) [above=of 16, xshift=0.5cm] {};

\path (18) edge [] node[left] {$2$} (16);
\path (18) edge [bend left=30] node[right] {$3$} (17);
\path (17) edge [bend left=30] node[right] {$1$} (18);

\node (19) [right=of 17,yshift=0.5cm,xshift=-0.5cm] {$-$};

\node[vertex2] (20) [right=of 17] {};
\node[vertex2] (21) [right=of 20] {};
\node[vertex1] (22) [above=of 20, xshift=0.5cm] {};

\path (22) edge [] node[right] {$3$} (21);
\path (22) edge [bend right=30] node[left] {$2$} (20);
\path (20) edge [bend right=30] node[left] {$1$} (22);

\node (23) [right=of 21,yshift=0.5cm,xshift=0cm] {$\big)+\cdots.$};

\end{tikzpicture}
}
\end{center}
\end{proposition}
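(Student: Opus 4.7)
The plan is to carry out a direct computation in the pre-Lie algebra $\prod_{n\geq 1} s^{-n+1}\dgraphs(n)$, verifying that the gauge action $\exp([\xi,-])\alpha_{\Duf}$ produces $\alpha'_{\Duf}$ at each displayed order. First I would identify $\alpha_{\Duf}$ explicitly: its arity-one component is the unique graph representing $d_A$ (one edge between an external and an internal vertex), and its arity-two component is the graph expansion of Kontsevich's star product, whose weights up to two internal vertices reproduce exactly the five displayed arity-two coefficients $\tfrac{1}{2}$, $-\tfrac{1}{12}$, $\tfrac{1}{12}$, $\tfrac{1}{8}$, $\tfrac{1}{24}$. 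Higher arities vanish because the $A_\infty$-structure $(d_A,\tilde m_{\Duf},0,\dots)$ is strict.

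Next I would expand
\begin{equation*}
\exp([\xi,-])\alpha_{\Duf} = \alpha_{\Duf} + [\xi,\alpha_{\Duf}] + \tfrac{1}{2}[\xi,[\xi,\alpha_{\Duf}]] + \cdots
\end{equation*}
and organise contributions by arity. Writing $\xi = \xi^{(2)}+\xi^{(3)}+\cdots$ (no arity-one part), I would isolate the arity-two correction to $[\xi^{(2)},\alpha_{\Duf}^{(1)}]$ (with further nested brackets in $\xi^{(2)}$ only, producing arity-two graphs with more internal vertices). Inserting the Chevalley-Eilenberg edge into $\xi^{(2)}$ at each external vertex, summing over admissible ways of reconnecting the loose half-edges and over the two orientations hidden in the undirected pictures, should cancel each of the five arity-two graphs in $\alpha_{\Duf}$, leaving only the disjoint-pair graph that represents the commutative product in $\alpha'_{\Duf}$. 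The coefficients $-\tfrac14$ and $-\tfrac{1}{16},\tfrac{1}{48}$ in the arity-two part of $\xi$ are precisely what is forced by these cancellations.

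In arity three, I would sum three kinds of contributions: $[\xi^{(3)},\alpha_{\Duf}^{(1)}]$, $[\xi^{(2)},\alpha_{\Duf}^{(2)}]$, and the arity-three piece of $\tfrac{1}{2}[\xi^{(2)},[\xi^{(2)},\alpha_{\Duf}^{(1)}]]$. After applying the generalized IHX relations (and the convention that undirected edges sum both orientations), one checks that the net arity-three output is the single trivalent-claw graph with coefficient $\tfrac{1}{24}$, matching the displayed arity-three term of $\alpha'_{\Duf}$. This fixes the arity-three coefficient of $\xi$, and the remaining $\xi^{(n)}$ for $n\geq 4$ may in principle be constructed inductively in the same manner within the displayed truncation.

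The main obstacle is the combinatorial bookkeeping rather than any conceptual point. One must carefully propagate the signs of the pre-Lie product $\star$ — which combine the suspension signs of Proposition~\ref{prop:prelieisos} with the position-dependent signs in the formula for $\star''$ — correctly enumerate all admissible insertions (including orientations of undirected edges and the placement of the freshly inserted edge last in the edge ordering), and repeatedly reduce modulo the generalized IHX relations to identify graphs that would otherwise appear distinct. Once this bookkeeping is organized, the verification is a finite mechanical calculation, since at each of the two orders in question only finitely many admissible graphs of bounded internal-vertex count can occur.
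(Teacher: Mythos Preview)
Your overall plan---directly expand $e^{\ad_\xi}\alpha_{\Duf}$ and compare with $\alpha'_{\Duf}$---is the same as the paper's, and the last paragraph is accurate: the proof is a finite bookkeeping exercise once the truncation is set up correctly. However, your organizing scheme contains a misreading of the formula for $\xi$.

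You split $\xi=\xi^{(2)}+\xi^{(3)}+\cdots$ by \emph{arity}, and then invoke a contribution $[\xi^{(3)},\alpha_{\Duf}^{(1)}]$ in arity three, concluding that this ``fixes the arity-three coefficient of $\xi$''. But look again at the three displayed pieces of $\xi$: all of them have exactly two external vertices. They differ not in arity but in the number of \emph{internal} vertices (zero for the first piece, one for each of the other two). There is no arity-three component of $\xi$ at the order shown, and none is needed: the arity-three term $\tfrac{1}{24}b$ in $\alpha'_{\Duf}$ arises entirely from brackets of the arity-two pieces of $\xi$ with the arity-two piece $a_2$ of $\alpha_{\Duf}$ (and from the nested bracket $[\xi_1,[\xi_1,a_1]]$, which has arity $2+2-1=3$). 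So your arity-three bookkeeping would not go through as written.

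The paper instead truncates by \emph{total number of vertices} (internal plus external), which is the filtration compatible with both the displayed expansions and with the Lie bracket. Writing $\alpha_{\Duf}=a_1+a_2+\tfrac12 a_3-\tfrac{1}{12}a_4+\tfrac18 a_5+\tfrac{1}{24}a_6+\cdots$ and $\xi=-\tfrac14\xi_1-\tfrac{1}{16}\xi_2+\tfrac{1}{48}\xi_3+\cdots$ (so all $\xi_i$ are in arity two), the only contributions to $e^{\ad_\xi}\alpha_{\Duf}$ with at most four total vertices are
\[
[\alpha_{\Duf}]-\tfrac14\ad_{\xi_1}\!\big(a_1+a_2+\tfrac12 a_3\big)+\tfrac{1}{32}\ad_{\xi_1}^2(a_1+a_2)-\tfrac{1}{16}\ad_{\xi_2}(a_1+a_2)+\tfrac{1}{48}\ad_{\xi_3}(a_1+a_2).
\]
One then computes the eight brackets $[\xi_i,a_j]$ explicitly (several vanish by symmetry or IHX), and plugging them in collapses the expression to $a_1+a_2+\tfrac{1}{24}b$. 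If you reorganize your computation along this vertex-count filtration rather than by arity, the verification becomes exactly the mechanical check you describe in your final paragraph.
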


\begin{proof}
Rewrite $\alpha_\Duf$ as
\begin{equation*}
\alpha_\Duf=a_1+a_2+\frac{1}{2}a_3-\frac{1}{12} a_4+\frac{1}{8} a_5+\frac{1}{24} a_6+\cdots.
\end{equation*}
Note that $a_4$ consists of the difference of two graphs, while the other $a_i$ denote just one graph. Accordingly, we write,
\begin{equation*}
\xi=-\frac{1}{4}\xi_1-\frac{1}{16}\xi_2+\frac{1}{48}\xi_3+\cdots,
\end{equation*}
where now $\xi_i$ all correspond to the difference of two graphs as depicted in the proposition. It is easily verified that the terms contributing to $\alpha'_\Duf=e^{\ad_\xi} \alpha_\Duf$ up to a total number of four vertices are,
\begin{equation}\label{eq:mk}
[\alpha'_\Duf]=[\alpha_\Duf]-\frac{1}{4} \ad_{\xi_1}(a_1+a_2+\frac{1}{2}a_3)+\frac{1}{2!}\frac{1}{16}\ad^2_{\xi_1}(a_1+a_2)-\frac{1}{16} \ad_{\xi_2} (a_1+a_2)+\frac{1}{48}\ad_{\xi_3}(a_1+a_2),
\end{equation}
where $[\alpha_\Duf]$ ($[\alpha'_\Duf]$) denotes the part of $\alpha_\Duf$ (of $[\alpha'_\Duf]$) having up to four vertices. To fix notation, set,
\begin {center}
\resizebox{!}{1cm}{
\begin {tikzpicture}[-latex, auto ,node distance =1cm and 1cm ,on grid ,
semithick];

\tikzstyle{vertex1}=[circle,minimum size=2mm,draw=black,fill=black];
\tikzstyle{vertex2}=[circle,minimum size=2mm,draw=black,fill=white];
\tikzset{edge/.style = {-}};

\node (0) [left=of 1,yshift=0.5cm,xshift=0.5cm] {$b=$};

\node[vertex2] (1) {};
\node[vertex2] (2) [right=of 1] {};
\node[vertex2] (3) [right=of 2] {};
\node[vertex1] (4) [above=of 2] {};
\draw[edge] (4) to node[left] {$1$} (1);
\draw[edge] (4) to node[left] {$2$} (2);
\draw[edge] (4) to node[left] {$3$} (3);

\node (5) [right=of 3,yshift=0.5cm,xshift=-0.5cm] {$=$};

\node[vertex2] (6) [right=of 5,yshift=-0.5cm,xshift=-0.5cm] {};
\node[vertex2] (7) [right=of 6] {};
\node[vertex2] (8) [right=of 7] {};
\node[vertex1] (9) [above=of 7] {};
\path (6) edge [] node[left] {$1$} (9);
\path (9) edge [] node[left] {$2$} (7);
\path (9) edge [] node[left] {$3$} (8);

\node (10) [right=of 8,yshift=0.5cm,xshift=-0.5cm] {$+$};

\node[vertex2] (11) [right= of 10,yshift=-0.5cm,xshift=-0.5cm] {};
\node[vertex2] (12) [right=of 11] {};
\node[vertex2] (13) [right=of 12] {};
\node[vertex1] (14) [above=of 12] {};
\path (14) edge [] node[left] {$1$} (11);
\path (12) edge [] node[left] {$2$} (14);
\path (14) edge [] node[left] {$3$} (13);

\node (15) [right=of 13,yshift=0.5cm,xshift=-0.5cm] {$+$};

\node[vertex2] (16) [right=of 15,yshift=-0.5cm,xshift=-0.5cm] {};
\node[vertex2] (17) [right=of 16] {};
\node[vertex2] (18) [right=of 17] {};
\node[vertex1] (19) [above=of 17] {};
\path (19) edge [] node[left] {$1$} (16);
\path (19) edge [] node[left] {$2$} (17);
\path (18) edge [] node[left] {$3$} (19);

\node (20) [right=of 18,yshift=0.5cm,xshift=0.5cm] {$=: b_1+b_2+b_3,$};
\end{tikzpicture}
}
\end{center}

\noindent and

\begin {center}
\resizebox{!}{2.5cm}{
\begin {tikzpicture}[-latex, auto ,node distance =1cm and 1cm ,on grid ,
semithick];

\tikzstyle{vertex1}=[circle,minimum size=2mm,draw=black,fill=black];
\tikzstyle{vertex2}=[circle,minimum size=2mm,draw=black,fill=white];
\tikzset{edge/.style = {-}};

\node (0) [left=of 1,yshift=0.5cm,xshift=0.5cm] {$Q=-$};

\node[vertex2] (1) {};
\node[vertex2] (2) [right=of 1] {};
\node[vertex2] (3) [right=of 2] {};
\node[vertex1] (4) [above=of 2] {};
\path (1) edge [] node[left] {$1$} (4);
\path (4) edge [] node[left] {$2$} (2);
\path (4) edge [] node[left] {$3$} (3);

\node (5) [right=of 3,yshift=0.5cm,xshift=-0.5cm] {$-$};

\node[vertex2] (6) [right=of 5,yshift=-0.5cm,xshift=-0.5cm] {};
\node[vertex2] (7) [right=of 6] {};
\node[vertex2] (8) [right=of 7] {};
\node[vertex1] (9) [above=of 7] {};
\path (9) edge [] node[left] {$1$} (6);
\path (9) edge [] node[left] {$2$} (7);
\path (8) edge [] node[left] {$3$} (9);

\node (10) [right=of 8,yshift=0.5cm,xshift=-0.5cm] {$+$};

\node[vertex2] (11) [right= of 10,yshift=-0.5cm,xshift=-0.5cm] {};
\node[vertex2] (12) [right=of 11] {};
\node[vertex2] (13) [right=of 12] {};
\node[vertex1] (14) [above=of 12] {};
\path (14) edge [] node[left] {$1$} (11);
\path (11) edge [] node[below] {$3$} (12);
\path (14) edge [] node[left] {$2$} (13);

\node (15) [right=of 13,yshift=0.5cm,xshift=-0.5cm] {$-$};

\node[vertex2] (16) [right=of 15,yshift=-0.5cm,xshift=-0.5cm] {};
\node[vertex2] (17) [right=of 16] {};
\node[vertex2] (18) [right=of 17] {};
\node[vertex1] (19) [above=of 17] {};
\path (19) edge [] node[left] {$1$} (16);
\path (17) edge [] node[below] {$3$} (16);
\path (19) edge [] node[left] {$2$} (18);

\node (20) [right=of 18,yshift=0.5cm,xshift=-0.5cm] {$-$};

\node[vertex2] (21) [right=of 20,yshift=-0.5cm,xshift=-0.5cm] {};
\node[vertex2] (22) [right=of 21] {};
\node[vertex2] (23) [right=of 22] {};
\node[vertex1] (24) [above=of 22] {};
\path (24) edge [] node[left] {$1$} (21);
\path (22) edge [] node[below] {$3$} (23);
\path (24) edge [] node[left] {$2$} (23);

\node (25) [right=of 23,yshift=0.5cm,xshift=-0.5cm] {$+$};

\node[vertex2] (26) [right=of 25,yshift=-0.5cm,xshift=-0.5cm] {};
\node[vertex2] (27) [right=of 26] {};
\node[vertex2] (28) [right=of 27] {};
\node[vertex1] (29) [above=of 27] {};
\path (29) edge [] node[left] {$1$} (26);
\path (28) edge [] node[below] {$3$} (27);
\path (29) edge [] node[left] {$2$} (28);

\node (30) [below=of 0,yshift=-1cm] {$+$};

\node[vertex2] (31) [right=of 30,yshift=-0.5cm,xshift=-0.5cm] {};
\node[vertex2] (32) [right=of 31] {};
\node[vertex2] (33) [right=of 32] {};
\node[vertex1] (34) [above=of 32,xshift=-0.5cm] {};
\path (34) edge [] node[left] {$1$} (31);
\path (34) edge [] node[right] {$2$} (32);
\path (31) edge [bend right=30] node[below] {$3$} (33);

\node (35) [right=of 33,yshift=0.5cm,xshift=-0.5cm] {$-$};

\node[vertex2] (36) [right=of 35,yshift=-0.5cm,xshift=-0.5cm] {};
\node[vertex2] (37) [right=of 36] {};
\node[vertex2] (38) [right=of 37] {};
\node[vertex1] (39) [above=of 37,xshift=-0.5cm] {};
\path (39) edge [] node[left] {$1$} (36);
\path (39) edge [] node[right] {$2$} (37);
\path (38) edge [bend left=30] node[below] {$3$} (36);

\node (39) [right=of 38,yshift=0.5cm,xshift=-0.5cm] {$-$};

\node[vertex2] (40) [right=of 39,yshift=-0.5cm,xshift=-0.5cm] {};
\node[vertex2] (41) [right=of 40] {};
\node[vertex2] (42) [right=of 41] {};
\node[vertex1] (43) [above=of 42,xshift=-0.5cm] {};
\path (43) edge [] node[left] {$1$} (41);
\path (43) edge [] node[right] {$2$} (42);
\path (40) edge [bend right=30] node[below] {$3$} (42);

\node (44) [right=of 42,yshift=0.5cm,xshift=-0.5cm] {$+$};

\node[vertex2] (45) [right=of 44,yshift=-0.5cm,xshift=-0.5cm] {};
\node[vertex2] (46) [right=of 45] {};
\node[vertex2] (47) [right=of 46] {};
\node[vertex1] (48) [above=of 47,xshift=-0.5cm] {};
\path (48) edge [] node[left] {$1$} (46);
\path (48) edge [] node[right] {$2$} (47);
\path (47) edge [bend left=30] node[below] {$3$} (45);

\node (49) [right=of 47,xshift=-0.4cm,yshift=0.3cm] {.};

\end{tikzpicture}
}
\end{center}

The graphical calculus allows us to compute the following identities. Note that some graphs cancel because of symmetries or the generalized IHX relations.
\begin{align*}
[\xi_1, a_1]=&2 a_3,\\
[\xi_1,a_2]=&0,\\
[\xi_1,a_3]=&Q,\\
[\xi_1,[\xi_1,a_1]]=&2[\xi_1,a_3]=2Q,\\
[\xi_2,a_1]=&-a_4+2a_5,\\
[\xi_2,a_2]=&-[\xi_1,a_3]-b_1-b_3=-Q-b_1-b_3,\\
[\xi_3,a_1]=&-2 a_6+a_4,\\
[\xi_3,a_2]=&-b_1+2b_2-b_3.
\end{align*}
Inserting this back into equation \eqref{eq:mk} yields,
\begin{equation*}
[\alpha'_\Duf]=a_1+a_2+\frac{1}{24}(b_1+b_2+b_3)=a_1+a_2+\frac{1}{24}b.
\end{equation*}
\end{proof}

\begin{remark}
The Maurer-Cartan element $\alpha_0\in \MC(\frakg_{\As,\dgraphs})$ corresponding to the universal $A_\infty$-structure $(d_A,\tilde m_0,0,\dots)$ is given by $\alpha_0=a_1+a_2$.
\end{remark}

\subsection{The cohomology of $H(\frakg_{As,\dgraphs},\mathrm{ad}_{a_2})$}

Fix $n\geq 1$. Consider the polynomial coalgebra $P_n:=\mathbb{K}[t_1,\dots,t_n]$. The variables $t_i$ are of degree zero. It is equipped with the usual coproduct which on homogeneous elements is given by,
\begin{equation*}
\Delta(t_{i_1} \cdots  t_{i_k})=1\otimes t_{i_1} \cdots  t_{i_k} +\sum\limits_{j=1}^{k-1} t_{i_1} \cdots  t_{i_j}\otimes t_{i_{j+1}}\cdots t_{i_k}+t_{i_1} \cdots  t_{i_k}\otimes 1.
\end{equation*}
Let $\Omega P_n=(T s^{-1} P_n,d)$ be its cobar construction. Here, $d:(s^{-1}P_n)^{\otimes k} \rightarrow (s^{-1}P_n)^{\otimes k+1}$ is the degree one map explicitly given by the alternating sum,
\begin{equation*}
d=\sum\limits_{i=0}^{k+1} (-1)^i d_i,
\end{equation*}
where for $p\in (s^{-1}P_n)^{\otimes k} $, $d_0 (p)=s^{-1}\otimes p$, $d_{k+1}(p)=p\otimes s^{-1}$ and for $p_i\in s^{-1}P_n$,
\begin{equation*}
d_i(p_1\otimes \cdots \otimes p_k)=p_1\otimes \cdots \otimes \Delta (p_i)\otimes \cdots \otimes p_k.
\end{equation*}
On $\Omega P_n$ there is an $\mathbb{N}_0^n$-grading counting the number of $t_1, \dots, t_n$ appearing in any monomial. The subspace of degree $(1,\dots,1)$ elements (i.e. every variable occurs exactly once), denoted by $\Omega P_n^{(1,\dots,1)}$, defines a subcomplex of $\Omega P_n$. For instance, (up to suspension) $t_1\otimes t_2t_3$ lies in  $\Omega P_3^{(1,1,1)}$, whereas $t_1\otimes t_2t_1$ does not. Note also that the $S_n$-action on $\Omega P_n^{(1,\dots,1)}$ which permutes the variables is compatible with the differential. 

\begin{proposition}
The cohomology of $(\Omega P_n^{(1,\dots,1)},d)$ is one-dimensional. More precisely, if we denote by,
\begin{equation*}
\omega_n=\frac{1}{n!}\sum\limits_{\sigma\in S_n}(-1)^{|\sigma|}s^{-1}t_{\sigma(1)}\otimes \cdots \otimes s^{-1}t_{\sigma(n)},
\end{equation*}
the totally antisymmetric element in $(s^{-1}P_n)^{\otimes n}$, then,
\begin{equation*}
H(\Omega P_n^{(1,\dots,1)},d)=H^n(\Omega P_n^{(1,\dots,1)},d)=\mathbb{K}\cdot [\omega_n]\cong s^{-n}\mathbb{K}.
\end{equation*}
\end{proposition}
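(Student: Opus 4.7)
The plan is to reduce the computation to a known topological fact by identifying $(\Omega P_n^{(1,\ldots,1)},d)$, after standard normalization, with the cellular chain complex of the permutahedron $\Pi_{n-1}$, whose contractibility as a convex polytope then delivers the stated cohomology.

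First, I would normalize: the subcomplex $D\subseteq\Omega P_n^{(1,\ldots,1)}$ spanned by tensors containing at least one $s^{-1}1$ factor is acyclic by a standard extra-degeneracy argument, so the projection onto the normalized complex $Ts^{-1}\bar P_n$ (with $\bar P_n$ the augmentation ideal of polynomials with zero constant term) is a quasi-isomorphism. A basis of the multi-weight $(1,\ldots,1)$ part of $(s^{-1}\bar P_n)^{\otimes k}$ is then indexed by the ordered set partitions $(I_1,\ldots,I_k)$ of $[n]:=\{1,\ldots,n\}$ into $k$ nonempty blocks, via
\begin{equation*}
(I_1,\ldots,I_k)\;\longleftrightarrow\; s^{-1}t_{I_1}\otimes\cdots\otimes s^{-1}t_{I_k},\qquad t_I:=\prod_{i\in I}t_i.
\end{equation*}
In this basis the differential is a signed sum over all splittings of a single block $I_i$ into two nonempty ordered pieces $J\sqcup L=I_i$, because the reduced coproduct on $t_I$ equals $\sum_{\emptyset\subsetneq J\subsetneq I}t_J\otimes t_{I\setminus J}$.

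Second, I would invoke the classical bijection between ordered partitions of $[n]$ into $k$ nonempty blocks and codimension-$(k-1)$ faces of the permutahedron $\Pi_{n-1}$, under which the cellular boundary corresponds precisely to the splitting operation. Combining the cobar factor $(-1)^i$ with the Koszul signs produced by moving degree-$(-1)$ suspensions past one another, the resulting sign system on our complex agrees, up to a global twist, with the standard cellular orientation cocycle on $\Pi_{n-1}$. This identifies $(\Omega P_n^{(1,\ldots,1)})^{k}$ with the cellular chains of $\Pi_{n-1}$ in dimension $n-k$, intertwining $d$ with the cellular boundary. Since $\Pi_{n-1}$ is a convex polytope, hence contractible, its cellular homology is $\mathbb{K}$ in degree zero and vanishes otherwise, giving $H^{k}(\Omega P_n^{(1,\ldots,1)})=0$ for $k<n$ and $H^{n}\cong\mathbb{K}$.

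Finally, the sign twist implements the signature of permutations: in cohomology one obtains
\begin{equation*}
[s^{-1}t_{\sigma(1)}\otimes\cdots\otimes s^{-1}t_{\sigma(n)}]=\sgn(\sigma)\,[s^{-1}t_1\otimes\cdots\otimes s^{-1}t_n]
\end{equation*}
for every $\sigma\in S_n$, so averaging over the symmetric group shows that $\omega_n$ is a nontrivial cohomology class, hence a generator of the one-dimensional $H^n$. I expect the main technical obstacle to be the sign bookkeeping in the second step, namely verifying that the cobar signs translate faithfully into the cellular orientation cocycle rather than producing a spurious cancellation that would kill $[\omega_n]$. A more elementary self-contained alternative would be to induct on $n$, using the filtration of $\Omega P_n^{(1,\ldots,1)}$ by whether the block containing $n$ is a singleton, applying the inductive hypothesis to the resulting subcomplex (which is isomorphic to a direct sum of shifted copies of $\Omega P_{n-1}^{(1,\ldots,1)}$, one for each possible position of $\{n\}$), and contracting the complementary quotient by an explicit homotopy that merges the block containing $n$ with an adjacent block.
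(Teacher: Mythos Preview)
Your proposal is correct, but it is worth noting that the paper does not give its own proof at all: it simply cites Drinfeld, Bar-Natan, and \v{S}evera--Willwacher for the computation. So your argument is not a variant of the paper's, it is a genuine proof where the paper offers none.

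Your permutahedron identification is a clean and standard way to organize the computation. After normalization, the basis elements of the weight-$(1,\dots,1)$ piece in tensor degree $k$ are exactly the ordered partitions of $[n]$ into $k$ nonempty blocks, which label the $(n-k)$-dimensional faces of $\Pi_{n-1}$; the reduced-coproduct differential splits one block into two ordered nonempty pieces, which is precisely the cellular boundary map. Contractibility of the polytope then gives one-dimensional cohomology concentrated in degree $n$, and your observation that $d(\cdots\otimes s^{-1}t_it_j\otimes\cdots)$ produces the \emph{symmetric} combination $s^{-1}t_i\otimes s^{-1}t_j+s^{-1}t_j\otimes s^{-1}t_i$ (no Koszul sign, since the $t_i$ have degree zero) shows that adjacent transpositions act by $-1$ on the top cohomology, pinning down $\omega_n$ as the generator. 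The sign bookkeeping you flag as a potential obstacle is therefore entirely manageable: it reduces to this single local check for adjacent transpositions.

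Your alternative inductive argument, filtering by whether $\{n\}$ is a singleton block, is essentially the strategy in Drinfeld's original proof (Proposition~2.2 of the reference the paper cites), so if you want to stay closest to the cited literature that route is the one to take. The permutahedron approach buys you a more conceptual one-line explanation at the cost of invoking a known combinatorial object; the inductive approach is more self-contained but requires writing out the homotopy.
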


\begin{proof}
The cohomology of the complex $(\Omega P_n^{(1,\dots,1)},d)$ was computed for instance in (\cite{Drinfeld89}, Proposition 2.2) by V. Drinfeld, in (\cite{Dror1997}, Section 4.2) by D. Bar-Natan and in \cite{Cubicalcomplex} by P. \v Severa and T. Willwacher.
\end{proof}

\begin{lemma}\label{lemma:complexes}
Consider $\dgraphsuni(n)$ equipped with the $S_n$-action which permutes the labels of the external vertices. There is an isomorphism of complexes,
\begin{equation*}
\left(s \hspace{0.05cm} \dgraphsuni(n)\otimes_{S_n}\Omega P_n^{(1,\dots,1)},1\otimes d \right)\cong (\prod\limits_{m\geq 1}s^{-m+1}\dgraphs(m, n),\ad_{a_2}).
\end{equation*}
\end{lemma}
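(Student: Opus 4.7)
The plan is to construct the isomorphism explicitly as a ``merging map'' $\Phi$ that builds a graph in $\dgraphs(m,n)$ out of the data of a graph in $\dgraphsuni(n)$ together with an ordered partition of its external vertices. A basis element of $\Omega P_n^{(1,\dots,1)}$ of tensor length $m$ is determined by an ordered partition $(B_1,\dots,B_m)$ of $\{1,\dots,n\}$ (with any intra-block ordering coming from the deconcatenation). Given such an $\omega$ and $\gamma\in\dgraphsuni(n)$, I would define $\Phi(\gamma\otimes\omega)\in\dgraphs(m,n)$ to be the graph obtained from $\gamma$ by replacing, for each $j$, the univalent external vertices indexed by $B_j$ with a single external vertex labeled $j$ that inherits all their incident edges. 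Since relabeling the external vertices of $\gamma$ by $\sigma\in S_n$ is compensated by relabeling the $t_i$'s in $\omega$ by $\sigma^{-1}$, the map $\Phi$ descends to the tensor product over $S_n$.

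Next, I would verify bijectivity by constructing the inverse: given $\Gamma\in\dgraphs(m,n)$, split each external vertex of $\Gamma$ into as many univalent external vertices as there are incident edges (using the linear order on edges to fix a canonical labeling), and take $(B_1,\dots,B_m)$ to be the induced partition. A quick degree count confirms matching: a graph in $\dgraphsuni(n)$ with $V_{\text{int}}$ internal vertices and $E_{\text{int}}$ purely internal edges contributes degree $2V_{\text{int}}-E_{\text{int}}-n$, and together with the length-$m$ tensor and the shift $s$ on the left, this gives $2V_{\text{int}}-E_{\text{int}}-n+m-1$, which coincides with the degree of the corresponding graph in $s^{-m+1}\dgraphs(m,n)$.

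The heart of the proof is checking that $\Phi$ intertwines the differentials. I would first rewrite $[a_2,-]=a_2\star(-)-(-)\star a_2$: the term $a_2\star\Gamma$ inserts an empty external vertex at position $1$ or $m+1$, while $\Gamma\star a_2=\sum_i\pm\Gamma\circ_i a_2$ replaces the $i$-th external vertex by two new ones and sums over redistributions of the incident edges. The ``trivial'' distributions in $\Gamma\circ_i a_2$ (where one new vertex receives no edges) at adjacent indices $i$ and $i+1$ cancel pairwise due to the alternating sign in $\star$, and the extreme boundary terms ($i=1$ with ``empty on left'' and $i=m$ with ``empty on right'') cancel against the two contributions of $a_2\star\Gamma$. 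A mirror cancellation occurs on the cobar side: for $d=\sum(-1)^id_i$, the $1\otimes p$ and $p\otimes 1$ contributions in $d_i$ cancel with the adjacent $d_{i-1}$, $d_{i+1}$ terms, and with $d_0$ and $d_{m+1}$ at the extremes, so only the non-trivial monomial splittings survive. Under $\Phi$, a non-trivial splitting $p_j\mapsto p_j'\otimes p_j''$ corresponds precisely to splitting the merged external vertex $j$ into two new external vertices carrying the edges labeled by the variables in $p_j'$ and $p_j''$ respectively.

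The main obstacle will be the bookkeeping of signs arising from three sources: the suspensions $s$ on the left and $s^{-m+1}$ on the right, the degree shift from $s^{-1}P_n$ in the cobar construction, and the explicit factor $(-1)^{|\Gamma_2|(1-n)+(i+1)(m-1)}$ in the definition of $\star$ on $\prod_n s^{-n+1}\dgraphs(n)$. Careful tracking of these is needed to confirm that the cancellations above occur on the nose and that $\Phi$ is a strict chain map, not merely one up to sign.
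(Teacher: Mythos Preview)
Your approach is essentially identical to the paper's: the paper defines the isomorphism by exactly the merging/gluing map you describe (illustrated in Figure~\ref{fig:S4isomorphism}) and then simply asserts that the bijection and compatibility of $1\otimes d$ with $\ad_{a_2}$ are easy to verify. Your proposal spells out in detail what the paper leaves implicit---the inverse map via splitting external vertices, the degree count, and the mechanism by which the boundary/degenerate terms of the cobar differential match the trivial redistributions in $\Gamma\star a_2$ and cancel against $a_2\star\Gamma$---so it is a fleshed-out version of the same argument rather than a different one.
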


A graphical interpretation of the isomorphism is given in Figure \ref{fig:S4isomorphism}. In the following, we denote by $s^{-\bullet+1}\dgraphs(\bullet,n):=\prod_{m\geq 1}s^{-m+1}\dgraphs(m,n)$.

\begin{proof}
The identification as described in Figure \ref{fig:S4isomorphism} is bijective. Moreover, it is easy to check that the differentials $1\otimes d$ and $\ad_{a_2}$ act in an equivalent way on their respective complexes.
\end{proof}

\begin {center}
\begin{figure}[ht]
\resizebox{!}{3cm}{
\begin {tikzpicture}[-latex, auto ,node distance =1cm and 1cm ,on grid ,
semithick];

\tikzstyle{vertex1}=[circle,minimum size=2mm,draw=black,fill=black];
\tikzstyle{vertex2}=[circle,minimum size=2mm,draw=black,fill=white];
\tikzstyle{vertex3}=[circle,minimum size=2mm,draw=black,dashed,fill=white];
\tikzset{edge/.style = {-}};

\node[vertex2] (1) {$1$};
\node[vertex2] (2) [right=of 1] {$3$};
\node[vertex2] (3) [right=of 2] {$2$};
\node[vertex2] (4) [right=of 3] {$4$};

\node (5) [above=of 1,yshift=0.13cm] {};
\node (6) [above=of 2,yshift=0.13cm] {};
\node (7) [above=of 3,yshift=0.13cm] {};
\node (8) [above=of 4,yshift=0.13cm] {};

\draw[edge] (1) to (5);
\draw[edge] (2) to (6);
\draw[edge] (3) to (7);
\draw[edge] (4) to (8);

\draw[edge] (-1,1) to (4,1);
\draw[edge] (-1,1) to (-1,2);
\draw[edge] (-1,2) to (4,2);
\draw[edge] (4,2) to (4,1);



\node (10) [right=of 4,xshift=1cm,yshift=1.cm] {$\otimes_{S_4}$};


\node[vertex2] (11) [right=of 4,xshift=2cm] {$1$};
\node[vertex2] (12) [right=of 11,xshift=0.5cm] {$2$};
\node[vertex2] (13) [right=of 12,xshift=0.5cm] {$3$};

\node (14) [above=of 11,yshift=0.5cm] {$s^{-1}t_1$};
\node (15) [above=of 12,yshift=0.5cm] {$s^{-1}t_4 t_2$};
\node (16) [above=of 13,yshift=0.5cm] {$s^{-1}t_3$};

\node (17) [above=of 11,xshift=0.75cm, yshift=0.5cm] {$\otimes$};
\node (18) [above=of 12,xshift=0.75cm, yshift=0.5cm] {$\otimes$};

\draw[edge] (11) to (14);
\draw[edge] [bend left=40] (12) to (15);
\draw[edge] [bend right= 40] (12) to (15);
\draw[edge] (13) to (16);

\node (19) [right=of 13,xshift=0cm,yshift=1cm] {$\mapsto$};

\node[vertex2] (20) [right=of 13,xshift=1cm] {$1$};
\node[vertex2] (21) [right=of 20,xshift=0.5cm] {$2$};
\node[vertex2] (22) [right=of 21,xshift=0.5cm] {$3$};

\node[vertex3] (23) [above=of 20,yshift=0.5cm] {$t_1$};
\node[vertex3] (24) [above=of 21,yshift=0.5cm,xshift=-0.5cm] {$t_4$};
\node[vertex3] (25) [above=of 21,yshift=0.5cm,xshift=0.5cm] {$t_2$};
\node[vertex3] (26) [above=of 22,yshift=0.5cm] {$t_3$};

\draw[edge] (20) to (23);
\draw[edge] [bend left=40] (21) to (24);
\draw[edge] [bend right= 40] (21) to (25);
\draw[edge] (22) to (26);

\node (27) [above=of 23,yshift=0.63cm] {};
\node (28) [above=of 24,yshift=0.57cm] {};
\node (29) [above=of 25,yshift=0.63cm] {};
\node (30) [above=of 26,yshift=0.57cm] {};

\draw[edge] (27) to (23);
\draw[edge] [bend right=10] (30) to (24);
\draw[edge] [bend right=30] (29) to (25);
\draw[edge] [bend left=10] (28) to (26);

\draw[edge] (10,3) to (15,3);
\draw[edge] (10,4) to (15,4);
\draw[edge] (10,3) to (10,4);
\draw[edge] (15,3) to (15,4);


\node (32) [right=of 22,yshift=1cm,xshift=0.5cm] {$=$};

\node[vertex2] (33) [right=of 22,xshift=2cm] {$1$};
\node[vertex2] (34) [right=of 33,xshift=0.5cm] {$2$};
\node[vertex2] (35) [right=of 34,xshift=0.5cm] {$3$};

\draw[edge] (16,2) to (21,2);
\draw[edge] (16,3) to (21,3);
\draw[edge] (16,2) to (16,3);
\draw[edge] (21,2) to (21,3);

\node (36) [above=of 33,yshift=1.13cm] {};
\node (37) [above=of 34,yshift=1.13cm,xshift=-0.5cm] {};
\node (38) [above=of 34,yshift=1.13cm,xshift=0.5cm] {};
\node (39) [above=of 35,yshift=1.13cm] {};

\draw[edge] (33) to (36);
\draw[edge] (34) to (38);
\draw[edge] (34) to (39);
\draw[edge] (35) to (37);


\end{tikzpicture}
}
\caption{Schematic description of the isomorphism from Lemma \ref{lemma:complexes}. The element $\Gamma\otimes_{S_4}(s^{-1}t_1\otimes s^{-1}t_4t_2\otimes s^{-1}t_3)\in \dgraphsuni(4)\otimes_{S_4} \Omega P_4^{(1,\dots,1)}$ is identified by this gluing procedure with a graph in $\dgraphs(3)$.\label{fig:S4isomorphism}}
\end{figure}
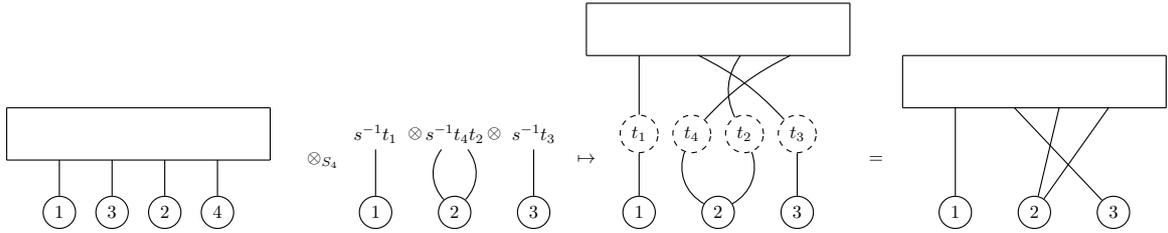
\end{center}

\begin{corollary}\label{cor:cohomdefcomplex}
We have,
\begin{equation*}
H(s\hspace{0.05cm} \dgraphsuni(n) \otimes_{S_n}\Omega P_n^{(1,\dots,1)},1\otimes d)=s \hspace{0.05cm} \dgraphsuni(n)\otimes_{S_n} \mathbb{K}\cdot [\omega_n].
\end{equation*}
Thus, $H(s^{-\bullet+1}\dgraphs(\bullet,n),\ad_{a_2})$ is spanned by graphs with univalent external vertices which are totally antisymmetric with respect to the $S_n$-action permuting the external vertices. Moreover, by taking the direct product over all $n$, we obtain,
\begin{equation*}
H(\frakg_{\As,\dgraphs},\ad_{a_2})\cong\prod\limits_{n\geq 1} s\hspace{0.05cm} \dgraphsuni(n)\otimes_{S_n}\mathbb{K}\cdot [\omega].
\end{equation*}
\end{corollary}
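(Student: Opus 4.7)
The plan is to deduce both claims directly from Lemma \ref{lemma:complexes} together with the preceding proposition on $H(\Omega P_n^{(1,\dots,1)},d)$, with essentially no further combinatorial input. First, I would invoke Lemma \ref{lemma:complexes} to replace the complex $(s^{-\bullet+1}\dgraphs(\bullet,n),\ad_{a_2})$ by the isomorphic $(s\,\dgraphsuni(n)\otimes_{S_n}\Omega P_n^{(1,\dots,1)},1\otimes d)$. Since the differential acts only on the second tensor factor, the computation reduces to understanding the cohomology of $\Omega P_n^{(1,\dots,1)}$ after tensoring over $S_n$ with the trivially-differentiated module $s\,\dgraphsuni(n)$.

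The key technical step is to interchange the coinvariants with the cohomology. Because we work over a field of characteristic zero, Maschke's theorem ensures $\mathbb{K}[S_n]$ is semisimple, so the functor $s\,\dgraphsuni(n)\otimes_{S_n}(-)$ is exact on the category of $S_n$-modules and thus commutes with cohomology. Combined with the preceding proposition, which gives $H(\Omega P_n^{(1,\dots,1)},d)=\mathbb{K}\cdot[\omega_n]$ concentrated in degree $n$, this immediately yields $H \cong s\,\dgraphsuni(n)\otimes_{S_n}\mathbb{K}\cdot[\omega_n]$, establishing the first displayed equality.

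To interpret the result as totally antisymmetric graphs, I would use that $\omega_n$ spans a copy of the sign representation of $S_n$. In characteristic zero, the coinvariants $M\otimes_{S_n}\mathrm{sgn}$ are canonically identified with the $S_n$-antisymmetric part of $M$ via the antisymmetrization idempotent. Unpacking the gluing identification of Figure \ref{fig:S4isomorphism}, the $S_n$-action on the variables $t_1,\dots,t_n$ corresponds precisely to the permutation of external vertex labels on the resulting element of $\dgraphs(\bullet,n)$, so the surviving classes are represented by linear combinations of graphs with univalent external vertices which transform by the sign character. Finally, the product form follows because $\ad_{a_2}$ respects the decomposition of $\frakg_{\As,\dgraphs}$ by number of external vertices (visible from \eqref{eq:graphs}), so taking $\prod_{n\geq 1}$ of the isomorphisms just obtained yields the final statement. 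No step poses a genuine obstacle: the only point that deserves a sentence of justification is the exactness of $\otimes_{S_n}$ in characteristic zero.
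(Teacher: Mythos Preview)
Your argument is essentially the paper's own: the paper's proof says only that taking coinvariants under a finite group action commutes with cohomology, and that the rest follows from Lemma~\ref{lemma:complexes}. Your expansion via Maschke's theorem and the identification of $\mathbb{K}\cdot[\omega_n]$ with the sign representation is precisely the content behind that one-line justification.

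One small slip in your final paragraph: you write that $\ad_{a_2}$ respects the decomposition of $\frakg_{\As,\dgraphs}$ by the number of external vertices, citing \eqref{eq:graphs}. It does not---the entire point of the complex $(s^{-\bullet+1}\dgraphs(\bullet,n),\ad_{a_2})$ is that $\ad_{a_2}$ \emph{changes} the arity $\bullet$. What $\ad_{a_2}$ does preserve is $n$, the number of edges incident to external vertices (since $a_2$ itself has no edges); it is this grading that splits $\frakg_{\As,\dgraphs}$ into the product of subcomplexes $\prod_n s^{-\bullet+1}\dgraphs(\bullet,n)$ over which you then take the product of the previously computed cohomologies. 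With that correction the argument is complete.
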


\begin{proof}
The first part of the statement follows from the fact that taking coinvariants under finite group actions commutes with taking cohomology. The rest is a consequence of Lemma \ref{lemma:complexes}.
\end{proof}

\begin{lemma}
The graph $b\in \dgraphs(3)$ represents a non-trivial class in $H^1(\frakg_{\As,\dgraphs},\ad_{a_1+a_2})$ under the identification \eqref{eq:graphs}.
\end{lemma}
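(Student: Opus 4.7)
The strategy is to combine the Maurer-Cartan expansion $\alpha'_\Duf=\alpha_0+\tfrac{1}{24}b+(\cdots)$ established in the preceding proposition with the cohomology computation of Corollary \ref{cor:cohomdefcomplex}, in the spirit of Lemma \ref{lemma:technical}.

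First I would introduce on $\frakg_{\As,\dgraphs}$ the auxiliary positive grading $w(\Gamma):=V(\Gamma)-1$, where $V(\Gamma)$ is the total number of vertices; then $a_1$ and $a_2$ sit in weight $1$ and $b$ in weight $3$, and the weight is compatible with the Lie bracket since $V(\Gamma_1\circ_i\Gamma_2)=V(\Gamma_1)+V(\Gamma_2)-1$. The preceding proposition exhibits $\alpha'_\Duf$ with weight-$1$ part $\alpha_0$, no weight-$2$ part, and weight-$3$ part $\tfrac{1}{24}b$. Extracting the weight-$4$ component of $[\alpha'_\Duf,\alpha'_\Duf]=0$ and using $[\alpha_0,\alpha_0]=0$, one obtains $[\alpha_0,b]=0$. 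Since $[a_1,b]$ and $[a_2,b]$ lie in distinct $(n,I)$-bidegrees (with $n$ the number of external and $I$ the number of internal vertices), this single relation splits as $[a_1,b]=0$ and $[a_2,b]=0$, so $b$ is an $\ad_{a_1+a_2}$-cocycle (and in particular an $\ad_{a_2}$-cocycle).

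Next I would show $[b]\neq 0$ in $H(\ad_{a_2})$ at bidegree $(n,I)=(3,1)$. Since $b\in\dgraphsuni(3)$, Corollary \ref{cor:cohomdefcomplex} identifies this class with $b\otimes_{S_3}\omega_3\in\dgraphsuni(3)\otimes_{S_3}\mathbb{K}\cdot[\omega_3]$; non-vanishing in the coinvariants is equivalent to $b$ being totally antisymmetric under the $S_3$-action on external labels. A direct sign count on the transposition $(12)$ reveals that relabeling the externals of each summand $b_i$ produces the canonical $b_{(12)(i)}$ with edge labels $1$ and $2$ swapped, giving an overall sign $-1$ per summand; hence $(12)\cdot b=-b$, and analogously for the other transpositions, so $b$ is antisymmetric and $[b]\neq 0$. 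This antisymmetry verification, reconciling the $S_3$-action on external labels with the signs induced by the linear ordering of edges in $\dgraphs(3)$, is the main obstacle in the proof.

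Finally, I would argue by contradiction: suppose $b=[a_1+a_2,y]$ for some $y\in\frakg_{\As,\dgraphs}^0$. Decomposing $y$ by vertex count, only the three-vertex component $y_3$ contributes to the weight-$3$ part of the equation, giving $[a_1+a_2,y_3]=b$. Splitting further by $(n,I)$, the component at $(4,0)$ reads $\ad_{a_2}(y_3^{(3,0)})=0$; since $H(\ad_{a_2})|_{(n,I)=(3,0)}=0$ by Corollary \ref{cor:cohomdefcomplex} (there is no perfect matching on three univalent externals), one can write $y_3^{(3,0)}=\ad_{a_2}(z)$ for some $z$. Using $[a_1,a_2]=0$ (derived from $[\alpha_0,\alpha_0]=0$ by the same bidegree separation) to anticommute $\ad_{a_1}$ with $\ad_{a_2}$, the $(n,I)=(3,1)$ component of the equation becomes $b=\ad_{a_2}\bigl(y_3^{(2,1)}-\ad_{a_1}(z)\bigr)$, forcing $b$ to be $\ad_{a_2}$-exact and contradicting the previous step.
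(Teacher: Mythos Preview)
Your argument is correct. It is close in spirit to the paper's proof but organized differently, and at the crucial non-exactness step it is actually more explicit than the paper.

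Both proofs share the first two ingredients: using Corollary \ref{cor:cohomdefcomplex} to get $[b]\neq 0$ in $H(\ad_{a_2})$, and checking $\ad_{a_1}(b)=0$ so that $b$ is an $\ad_{\alpha_0}$-cocycle. (The paper verifies $\ad_{a_1}(b)=0$ by hand; you extract it from the weight-$4$ part of $[\alpha'_\Duf,\alpha'_\Duf]=0$ together with the $(n,I)$ bidegree splitting. Both are fine.)

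The difference lies in the non-exactness argument. The paper produces an explicit primitive $c$ with $b=[a_1,c]$ and passes to the $\ad_{\alpha_0}$-cohomologous element $-b'=-[a_2,c]$, which has \emph{no} internal vertices; it then argues (very tersely) that such a class cannot be $\ad_{\alpha_0}$-exact because $\ad_{a_1}$ always raises the number of internals. Implicitly this uses exactly the vanishing you invoke, namely $H(\ad_{a_2})$ in bidegree $(m,I)=(3,0)$ is zero, so that any candidate primitive for $b'$ at $(3,0)$ can be corrected to give an $\ad_{a_2}$-primitive for $b$. You instead stay with $b$ and unwind this filtration argument directly: assuming $b=[\alpha_0,y_3]$, the $(4,0)$ component forces $y_3^{(3,0)}$ to be $\ad_{a_2}$-closed, the vanishing of $H(\ad_{a_2})|_{(3,0)}$ makes it exact, and then the $(3,1)$ component together with $[a_1,a_2]=0$ forces $b$ to be $\ad_{a_2}$-exact, a contradiction.

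What each approach buys: the paper's route gives a concrete representative $b'$ with no internals, which is pleasant and makes the spectral-sequence picture visible, but the final sentence leaves the reader to reconstruct why ``no internals'' really implies non-exactness. Your route avoids the explicit $b'$ and $c$ and instead spells out the filtration step in full, making the role of $H(\ad_{a_2})|_{(3,0)}=0$ and of $[a_1,a_2]=0$ completely transparent. One small remark: your parenthetical ``no perfect matching on three univalent externals'' is the right idea but phrased loosely; the precise statement is that for $q=4$ the cohomology sits at $m=4$, while for $m=3$ one would need $q=3$, and $\dgraphsuni(3)$ has no graphs with zero internals since three univalent vertices carry an odd number of half-edges.
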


\begin{proof}
By Corollary \ref{cor:cohomdefcomplex}, we know that $b$ represents a non-trivial class in $H^1(\frakg_{\As,\dgraphs},\ad_{a_2})$. It is also easily verified that $\ad_{a_1}(b)=0$. Moreover, $b$ is cohomologous to the graph $b'$ on the left in Figure \ref{fig:cohomologous}. They satisfy $b=-b'+(\ad_{a_1+a_2})(c)$, where $c$ is the graph on the right in Figure \ref{fig:cohomologous}. Since $b'$ has no internal vertices it cannot be exact under $\ad_{a_1+a_2}$, as all graphs in the image of the differential $\ad_{a_1}$ have at least on internal vertex. Thus, $b$ is not exact and the statement follows.
\end{proof}

\begin {center}
\begin{figure}[ht]
\resizebox{!}{1.5cm}{
\begin {tikzpicture}[-latex, auto ,node distance =1cm and 1cm ,on grid ,
semithick];

\tikzstyle{vertex1}=[circle,minimum size=2mm,draw=black,fill=black];
\tikzstyle{vertex2}=[circle,minimum size=2mm,draw=black,fill=white];
\tikzset{edge/.style = {-}};

\node (1)  {$b'=$};

\node[vertex2] (2) [right=of 1] {$1$};
\node[vertex2] (3) [right=of 2,xshift=0.5cm] {$2$};
\node[vertex2] (4) [right=of 3,xshift=0.5cm] {$3$};
\node[vertex2] (5) [right=of 4,xshift=0.5cm] {$4$};

\draw[edge] (2) to  node[above] {$1$}  (3);
\draw[edge] (4) to  node[above] {$2$}  (5);

\node (6) [right=of 5] {$+$};

\node[vertex2] (7) [right=of 6] {$1$};
\node[vertex2] (8) [right=of 7,xshift=0.5cm] {$2$};
\node[vertex2] (9) [right=of 8,xshift=0.5cm] {$3$};
\node[vertex2] (10) [right=of 9,xshift=0.5cm] {$4$};

\draw[edge] (7) [bend left=40] to  node[above] {$1$}(9);
\draw[edge] (8) [bend right=40] to  node[below] {$2$}(10);

\node (11) [right=of 10,xshift=0.5cm] {$c=$};

\node[vertex2] (12) [right=of 11] {$1$};
\node[vertex2] (13) [right=of 12,xshift=0.5cm] {$2$};
\node[vertex2] (14) [right=of 13,xshift=0.5cm] {$3$};

\draw[edge] (12) to  node[above] {$1$} (13);
\draw[edge] (13) to  node[above] {$2$} (14);

\node (15) [right=of 10,xshift=-0.4cm,yshift=-0.2cm] {,};

\end{tikzpicture}
}
\caption{We have $b=-b'+[a_1+a_2,c]$.\label{fig:cohomologous}}
\end{figure}
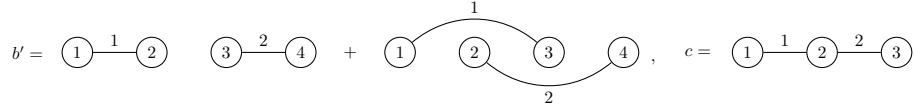
\end{center}

\begin{corollary}
The Maurer-Cartan elements $\alpha_\Duf$ and $\alpha_0$ are not gauge equivalent.
\end{corollary}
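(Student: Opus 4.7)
The plan is to combine the preceding proposition (which exhibits an explicit gauge equivalent $\alpha'_\Duf$ of $\alpha_\Duf$), the last lemma (non-triviality of $b$ in cohomology), and the technical Lemma \ref{lemma:technical}. Since gauge equivalence is an equivalence relation, it suffices to show that $\alpha'_\Duf$ and $\alpha_0 = a_1 + a_2$ are not gauge equivalent.

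To set up Lemma \ref{lemma:technical}, I first introduce an auxiliary positive grading on $\frakg_{\As,\dgraphs}$, the \emph{weight}
\begin{equation*}
w(\Gamma) := \#V(\Gamma) - 1,
\end{equation*}
where $\#V(\Gamma)$ is the total number of vertices (internal plus external) of $\Gamma$. Since the operadic insertion $\Gamma_1\circ_i \Gamma_2$ replaces one external vertex of $\Gamma_1$ by all vertices of $\Gamma_2$, one has $\#V(\Gamma_1\circ_i\Gamma_2)=\#V(\Gamma_1)+\#V(\Gamma_2)-1$, hence $w$ is additive under $\circ_i$ and therefore compatible with both the pre-Lie product $\star$ and the Lie bracket on $\frakg_{\As,\dgraphs}$. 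This realizes $\frakg_{\As,\dgraphs}=\bigoplus_{i\geq 1}\frakg^{(i)}_{\As,\dgraphs}$ as a Lie algebra with a compatible positive grading.

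Next I decompose $\alpha'_\Duf$ with respect to $w$. Direct inspection of the graphs $a_1,a_2,b$ gives $w(a_1)=w(a_2)=1$ and $w(b)=3$, and the explicit formula in the proposition shows that \emph{all} contributions to $\alpha'_\Duf$ with up to four vertices are exactly $a_1+a_2+\frac{1}{24}b$. In particular, the weight-$1$ part of $\alpha'_\Duf$ is $a_1+a_2=\alpha_0$, the weight-$2$ part vanishes, and the weight-$3$ part equals $\frac{1}{24}b$. Writing
\begin{equation*}
\alpha'_\Duf = (a_1+a_2) + \tfrac{1}{24}\,b + (\text{terms of weight}\geq 4),
\end{equation*}
I am in the setting of Lemma \ref{lemma:technical} with $\alpha^{(1)}=a_1+a_2$ and $\alpha^{(k)}=\frac{1}{24}b$ for $k=3$.

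Finally, by the preceding lemma $b$ represents a non-trivial class in $H^1(\frakg_{\As,\dgraphs},\ad_{a_1+a_2})$, and therefore so does $\frac{1}{24}b$. Lemma \ref{lemma:technical} then yields directly that $\alpha'_\Duf$ and $a_1+a_2=\alpha_0$ are not gauge equivalent. Composing with the gauge equivalence $\alpha_\Duf \sim \alpha'_\Duf$ from the proposition finishes the proof, and via Lemma \ref{lemma:universal} this reproves Theorem \ref{thm:maintheoreom}. The only non-routine step is the weight-grading compatibility and the verification that no weight-$2$ graph sneaks into $\alpha'_\Duf$; both are immediate once one commits to the vertex-count grading, so the real content of the argument has already been carried out in the previous lemma.
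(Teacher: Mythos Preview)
Your proof is correct and follows essentially the same approach as the paper: the weight grading $w(\Gamma)=\#V(\Gamma)-1$ you introduce is exactly the paper's grading $(\#\text{internal vertices}+\#\text{external vertices})-1$, and the argument then proceeds identically via Lemma~\ref{lemma:technical} applied to $\alpha'_\Duf$ together with the non-triviality of $[b]$ and the gauge equivalence $\alpha_\Duf\sim\alpha'_\Duf$. Your write-up is in fact slightly more explicit than the paper's, spelling out the additivity of $w$ under operadic insertion and the vanishing of the weight-$2$ part.
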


\begin{proof}
Define a second grading on $\frakg_{\As,\dgraphs}\cong \prod_{n\geq 1}s^{-n+1}\dgraphs(n)$ by
\begin{equation*}
(\#\text{internal vertices}+\#\text{external vertices})-1.
\end{equation*}
The grading is compatible with the Lie algebra structure. The degree one part of $\alpha'_\Duf$, denoted by $\alpha'^{(1)}_\Duf$, is given by $\alpha_0=a_1+a_2$. The degree three part $\alpha'^{(3)}_\Duf$ equals $1/24\cdot b$. Since $ [b]\neq 0 \in H^1(\frakg_{\As,\dgraphs},\ad_{\alpha_0})$, we may apply Lemma \ref{lemma:technical} to find that $\alpha_0$ is not gauge equivalent to $\alpha'_\Duf$. The fact that $\alpha_\Duf$ is gauge equivalent to $\alpha'_\Duf$ establishes the result.
\end{proof}

\begin{proof}[Proof of Theorem \ref{thm:maintheoreom}]
The universal Maurer-Cartan elements $\alpha_\Duf$ and $\alpha_0$ correspond to the universal $A_\infty$-structures $(d_A,\tilde m_\Duf,0,\dots)$ and $(d_A,\tilde m_0,0,\dots)$ on $A$. Since $\alpha_\Duf$ is not gauge equivalent to $\alpha_0$, there is no universal $A_\infty$-isomorphism between these two structures, by Lemma \ref{lemma:universal}.
\end{proof}

\bibliographystyle{plain}
\bibliography{Infinityduflo}

\end{document}